\newcommand\textcyr[1]{{\fontencoding{OT2}\selectfont #1}}
\DeclareSymbolFont{euleroperators}{U}{eur}{m}{n}
\renewcommand{\operator@font}{\mathgroup\symeuleroperators}
\def\@seccntformat#1{%
  \protect\textup{\protect\@secnumfont
    \ifnum\pdfstrcmp{subsection}{#1}=0 \bfseries\fi
    \csname the#1\endcsname
    \protect\@secnumpunct
  }%
}
\newtheorem{thm}[equation]{Theorem}
\newtheorem{clm}[equation]{Claim}
\newtheorem{cor}[equation]{Corollary}
\newtheorem{lem}[equation]{Lemma}
\newtheorem{prp}[equation]{Proposition}
\theoremstyle{definition}
\theoremstyle{remark}
\newtheorem{rem}[equation]{Remark}
\newcommand{\thmref}[1]{Theorem~\ref{#1}}
\newcommand{\prpref}[1]{Proposition~\ref{#1}}
\newcommand{\lemref}[1]{Lemma~\ref{#1}}
\newcommand{\dfnref}[1]{Definition~\ref{#1}}
\newcommand{\remref}[1]{Remark~\ref{#1}}
\newcommand{\secref}[1]{Section~\ref{#1}}
\newcommand{\clmref}[1]{Claim~\ref{#1}}
\DeclareMathOperator{\bnd}{bnd}
\DeclareMathOperator{\diag}{diag}
\DeclareMathOperator{\fun}{fun}
\DeclareMathOperator{\gr}{gr}
\DeclareMathOperator{\sgr}{sgr}
\DeclareMathOperator{\supp}{supp}
\DeclareMathOperator{\Unif}{Unif}
\newcommand{\al}{\alpha}
\newcommand{\de}{\delta}
\newcommand{\Eg}{\mathfrak E}
\newcommand{\eps}{\varepsilon}
\newcommand{\gb}{\mathbf{g}}
\newcommand{\ka}{\varkappa}
\newcommand{\la}{\lambda}
\newcommand{\Lc}{\mathcal L}
\newcommand{\N}{{\boldsymbol{\#}}}
\newcommand{\nnu}{\boldsymbol{\upnu}}
\newcommand{\ol}{\overline}
\newcommand{\om}{\omega}
\newcommand{\Pb}{\mathbf P}
\newcommand{\ph}{\varphi}
\newcommand{\Ps}{\p_\mu G}
\newcommand{\p}{\partial}
\newcommand{\Sc}{\mathcal S}
\newcommand{\sd}{\rightthreetimes}
\newcommand{\si}{\sigma}
\newcommand{\sm}{\setminus}
\newcommand{\tos}{\mathop{\,\relbar\joinrel\rightsquigarrow\,}}
\newcommand{\tha}{\theta}
\newcommand{\upth}{\uptheta}
\newcommand{\wh}{\widehat}
\newcommand{\wt}{\widetilde}
\newcommand{\ZZ}{\mathbb Z}
\begin{document}

\title[Random walks with infinite entropy]{Liouville property and Poisson boundary of random walks with infinite entropy: what's amiss?}

\author{Vadim A. Kaimanovich}

\address{Department of Mathematics and Statistics, University of Ottawa, Canada}

\email{vadim.kaimanovich@gmail.com}

\dedicatory{To the memory of my teacher Anatoly Moiseevich Vershik}

\begin{abstract}
We discuss the qualitatively new properties of random walks on groups that arise in the situation when the entropy of the step distribution is infinite.
\end{abstract}

\subjclass{28D20, 37A50, 43A07, 60J50}
\keywords{random walk, asymptotic entropy, Liouville property, Poisson boundary.}

\maketitle

\thispagestyle{empty}

\section*{Introduction}

{\bf 1.} Endowing a countable group $G$ with a probability measure $\mu$ rigs the group with an additional structure in a sense akin to a Riemannian metric on a smooth manifold. In both situations this additional structure gives rise to a Markov dynamics on the original state space: this is a Brownian motion in the manifold case and a random walk in the group case.

In the framework of the general theory of Markov processes one can then define the \emph{Poisson boundary} of the state space. In the probabilistic language this is a measure space that describes all stochastically significant behaviour of the sample paths of the respective Markov process at infinity. In ergodic terms this is the space of ergodic components of the time shift on the path space, whereas in analytic terms this is the spectrum of the commutative Banach algebra of bounded harmonic functions.

It is then natural to ask about a complete description (identification) of the Poisson boundary in terms of the ``observed'' limit behaviour of Markov sample paths. This behaviour can be characterized by their analytic, geometric, algebraic, or combinatorial properties. In the particular case when no limit behaviour is observed at all, already the question about triviality of the Poisson boundary, i.e., about absence of non-constant bounded harmonic functions (the Liouville property), or, equivalently, about ergodicity of the time shift on the path space, is highly non-trivial (in the same way as the question about ergodicity of many other dynamical systems).

From a probabilistic point of view the Liouville property is equivalent to asymptotic vanishing of the dependence of the one-dimensional distributions of the Markov process on its initial distribution as time goes to infinity. This is why for random walks on groups this property can also be formulated in an entirely analytic language, without relying on any probabilistic interpretations. Namely, an irreducible probability measure $\mu$ on a group $G$ is Liouville if and only if the \emph{means} on $G$ that arise by passing to a Banach limit of the sequence of the convolution powers~$\mu^{*t}$ are left invariant
(see~\S~\ref{sec:im} as well as \remref{rem:inv} and \remref{rem:im2}).

\medskip

{\bf 2.} The fact that random walks on groups are homogeneous not only in time, but also in space, allows one to define the \emph{asymptotic entropy}~$h(\mu)$,
which makes the aforementioned question about the Liouville property entirely quantitative: the Poisson boundary $\Ps$ of a random walk $(G,\mu)$ is trivial if and only if $h(\mu)=0$ (for the sake of simplicity we assume, throughout the Introduction, that the Poisson boundary is endowed with the \emph{primary harmonic measure} $\nu$ that corresponds to the initial distribution concentrated at the group identity, see \S~\ref{sec:pois}). There is an analogous criterion (based on the asymptotic entropy of conditional random walks) for the general problem of identification of the Poisson boundary as well. Nonetheless, these criteria only work under the \emph{finiteness assumption} on the entropy $H(\mu)$ of the step distribution $\mu$.

The asymptotic entropy
$$
h(\mu)=\lim_{t\to\infty} \frac{H(\mu^{*t})}{t}
$$
(where $\mu^{*t}$ denotes the $t$-fold convolution of the measure $\mu$ with itself) was introduced by A.~Avez in 1972 \cite{Avez72} (and a bit later, independently, by A.~M.~Vershik), and the closely related (as became clear later) differential entropy of the boundary had been used by H.~Furstenberg even earlier~\cite{Furstenberg63a}. However, the full development of the entropy theory was achieved only on the basis of ergodic methods related to an analogy between the asymptotic entropy of random walks on groups and the entropy of dynamical systems. This program was proposed by A.~M.~Vershik in the 1970s (see \cite[p.~60]{Vershik00r}) and implemented in his joint works with the author \cite{Vershik-Kaimanovich79r,Kaimanovich-Vershik83}, as well as independently (and using a somewhat different approach) by Y.~Derriennic \cite{Derriennic80}. Subsequently, the entropy theory of random walks was extended by the author to a much broader class of ``stochastically homogeneous'' Markov chains (see  \remref{rem:gen}).

A rather unexpected consequence of the entropy theory was the appearance of the finiteness condition for the entropy of the step distribution $\mu$ in a situation that at first glance is completely unrelated to entropy. The entropy of the reflected measure $\check\mu(g)=\mu(g^{-1})$ obviously coincides with the entropy of the original measure $\mu$, and therefore their asymptotic entropies also coincide. Thus, both measures $\mu$ and $\check\mu$ are either Liouville or non-Liouville simultaneously, i.e., left invariance of the corresponding limit means on the group is automatically equivalent to their right invariance. On the other hand, for measures with infinite entropy, this is not necessarily the case: already in 1983 the author constructed an example of a measure $\mu$ with infinite entropy that is ``one-sided Liouville'' (it is Liouville itself, while the reflected measure $\check\mu$ is not Liouville).

\medskip

{\bf 3.} The aim of the present paper is to demonstrate the critical role of entropy finiteness in yet another situation: for random walks on products of several groups.

Let us consider the product $\wt G=G\times G'$ of two (for simplicity) groups $G$ and $G'$, with the random walk defined by a step distribution $\wt\mu$. The projections of this walk onto each coordinate are the random walks on $G$ and $G'$ determined by the respective marginal distributions $\mu$ and~$\mu'$ of the measure $\wt\mu$. It is natural to ask about the relationship between the Liouville properties of these three measures, or, more generally, to ask how the Poisson boundaries of these three walks are related.

The simplest way to obtain a measure with the given marginals $\mu$ and $\mu'$ is to take their product $\wt\mu=\mu\otimes\mu'$. One can also consider a convex combination
$$
\wt\mu=\al\mu\otimes\de_{e'} + (1-\al)\de_e\otimes\mu' \;,
$$
where $e$ and $e'$ are the identities of the respective groups. In the latter case the marginals are not the measures $\mu,\mu'$ themselves, but their ``lazy'' modifications $\al\mu+(1-\al)\de_e$ and $(1-\al)\mu'+\al\de_{e'}$, which does not really change anything, as the Poisson boundaries of the lazy measures are the same as those of the respective original measures $\mu,\mu'$. In both cases the Poisson boundary $\p_{\wt\mu}\wt G$ is the product of the Poisson boundaries of the quotient groups $\Ps$ and $\p_{\mu'}G'$ as measure spaces (i.e., the primary harmonic measure of the random walk on $\wt G$ is a product measure), and, in particular, the measure $\wt\mu$ is Liouville if and only if both measures $\mu$ and $\mu'$ are simultaneously Liouville.

These definitions are not specific to random walks, and they can be applied to arbitrary Markov chains (with a formal replacement of step distributions with Markov operators). The resulting Markov chains are called, respectively, \emph{direct and Cartesian products of the original chains,} and the proofs of the aforementioned properties boil down to verifying that the product of two minimal positive harmonic functions is also minimal with respect to the product operator \cite[p.~146]{Kaimanovich92}. Conversely, the statements about the factorization of minimal positive harmonic functions of the product operator (S.~A.~Molchanov \cite{Molchanov67r} for direct products, M.~Picardello and W.~Woess \cite{Picardello-Woess92} for Cartesian products) are much less obvious (in particular, because they require involvement of $\la$-harmonic functions with $\la\neq 1$), and their proofs rely on the Martin theory.

In the context of arbitrary Markov chains, there are no other universal methods for obtaining a Markov chain on the product space whose marginals would be given chains on the spaces-multipliers, but this is not the case for random walks on groups. Here we are talking precisely about the aforementioned random walks whose step distribution $\wt\mu$ has given marginal distributions $\mu,\mu'$. Although this formulation is very natural, such walks have not been subject to a systematic study, even though ``multicomponent'' Poisson boundaries did arise, for example, in the work of the author \cite{Kaimanovich00a} for discrete subgroups in the product of several simple Lie groups or in the work of S.~Brofferio and B.~Shapira \cite{Brofferio-Schapira11} for groups of rational matrices. Author's interest in this subject was sparked by a question of R.~I.~Grigorchuk about random walks on the product of several copies of the same group, which arise when considering self-similar groups, as such products have a finite index in the corresponding permutational wreath products \cite{Grigorchuk-Kaimanovich23}.

\medskip

{\bf 4.} It is not difficult to verify (\prpref{prp:2} and \remref{rem:gen}) that under the condition of finiteness of the entropy of the measure $\wt\mu$, the Poisson boundary $\p_{\wt\mu}\wt G$ is fully described by the behavior of the quotient walks on the groups-multipliers, i.e., the mapping $\p_{\wt\mu}\wt G\to \Ps\times\p_{\mu'} G'$ is an isomorphism of measure spaces. It should be emphasized that although the marginals of the primary harmonic measure $\wt\nu$ are the respective primary harmonic measures $\nu,\nu'$, this by no means implies that $\wt\nu$ is their product. The question of describing the measure $\wt\nu$ seems to be quite interesting, and it remains completely open even in the simplest situations, for example, for the measure $\wt\mu=\frac12(\mu\otimes\mu + \diag\mu)$ in the case when $G=G'$ is a free group, $\mu$ is the uniform distribution on the (symmetric) set of free generators, and $\diag\mu$ is the corresponding measure on the diagonal of the product $G\times G$. Note that the spatial homogeneity is essential in defining a random walk on $G\times G$ with the transition measure $\diag\mu$, and such a ``diagonal'' Markov operator cannot be defined for an arbitrary Markov chain.

In particular, if the entropy $H(\wt\mu)$ of the measure $\wt\mu$ is finite, then the Liouville property for this measure is equivalent to its marginals $\mu$ and $\mu'$ being Liouville simultaneously (\prpref{prp:2}). We show that if the entropy finiteness condition is dropped, this is no longer necessarily the case, by explicitly constructing the corresponding counterexample (\thmref{thm:3}).

This construction is based on using, in a slightly modified form, the aforementioned example of a one-sided Liouville measure $\mu$ on the wreath product $\Lc=\ZZ\wr\ZZ_2$ (the one-dimensional lamplighter group), whose projection onto $\ZZ$ has a non-zero mean. The idea is quite transparent and consists in taking the ``semidiagonal'' subgroup $\wt\Lc_0$ with a common $\ZZ$-component in the product $\wt\Lc=\Lc\times\Lc$, and then considering a measure $\wt\mu$ on it whose marginals coincide with $\mu$, and thus are Liouville. On the other hand, it is easy to choose $\wt\mu$ in such a way that it is ``almost'' (but not completely) diagonal in the sense that its image on $\Lc$ under the mapping $(n,\ph,\ph')\to (n,\ph-\ph')$ has a non-singleton finite support and is not Liouville, thus ensuring that the measure $\wt\mu$ is not Liouville either.

\medskip

{\bf 5.} With rare exceptions (for example, see the ``trunk'' criterion of A.~Erschler and the author \cite[Proposition 3.8]{Erschler-Kaimanovich23}), the entropy theory (more precisely, the criterion from \cite{Kaimanovich00a}) remains the only means of identifying a non-trivial Poisson boundary of random walks on countable groups. Very recently, the \emph{Ultima Thule} of the entropy theory has been reached for two classes of groups, serving as the main examples of this kind: Gromov hyperbolic groups (and their various modifications) in the work of K.~Chawla, B.~Forghani, J.~Frisch, and G.~Tiozzo \cite{Chawla-Forghani-Frisch-Tiozzo22p}, and for wreath products in the work of J.~Frisch and E.~Silva \cite{Frisch-Silva23p}. Namely, it was shown that in both cases convergence to the ``natural'' boundary (the hyperbolic one in the first case and the space of infinite configurations in the second case) indeed fully describes the Poisson boundary without any additional conditions on the step distribution, other than finiteness of its entropy. It is worth noting, though, that in the hyperbolic case convergence to the boundary does not require any assumptions about the step distribution at all (except for the purely geometric condition of non-elementarity), and the problem of identifying the Poisson boundary, for example, for a random walk on a free group, posed by A.~M.~Vershik and the author 40 years ago \cite[p.~485]{Kaimanovich-Vershik83}, still remains completely open (it remains open even for free semigroups, in the absence of cancellations, see the description of the current state of affairs in \cite{Chawla-Forghani-Frisch-Tiozzo22p}). The situation with wreath products is somewhat more complicated, and we refrain from discussing the preceding very interesting results in this direction, referring the reader to the detailed review given in \cite{Frisch-Silva23p}.

In light of this, it should be noted that the example we constructed is, to the best of author's knowledge, the first one in which the Poisson boundary of a random walk with infinite entropy (in our example described by nontrivial \emph{joint} behavior of pairs of configurations) would be qualitatively different from the Poisson boundaries of random walks with finite entropy (described in our case by \emph{separate} behavior of the marginal walks).

The recent breakthrough of J.~Frisch, Y.~Hartman, O.~Tamuz, and P.~Vahidi Ferdowsi \cite{Frisch-Hartman-Tamuz-Ferdowsi19} led to a complete description of the groups for which both Liouville and non-Liouville measures coexist. This is precisely the class of amenable groups that are not hyper-FC-central (do not coincide with their hyper-FC-center), or, equivalently, the amenable groups with an ICC quotient group (i.e., one in which all nontrivial conjugacy classes are infinite). Soon thereafter, A.~V.~Alpeev \cite{Alpeev21p} showed that one-sided Liouville measures exist on \emph{any} group from this class. After the author had informed him of the main result of this work, A.~V.~Alpeev \cite{Alpeev23p} proved that an example with the same properties can be constructed for the product of \emph{arbitrary} groups from this class, and moreover, all measures involved in the construction can be chosen symmetric and irreducible.

Finally, it should be noted that it would be wrong to think that ``all'' measures with infinite entropy are ``pathological.'' There is a very general construction described in a joint work with B.~Forghani \cite{Forghani-Kaimanovich23p} that allows, starting from a certain fixed step distribution, to build numerous other measures (in particular, those with infinite entropy) with the same Poisson boundary.

\medskip

{\bf 6.} The paper is organized as follows. We begin with recalling the main definitions and results used throughout, while formulating and proving necessary auxiliary statements along the way. In the first two sections we discuss the Poisson boundary (\secref{sec:1}) and the entropy theory of random walks (\secref{sec:e}). Then we talk about wreath products of groups (\secref{sec:3}) and random walks on them (\secref{sec:4}). Here we describe the random walks on the groups $\Lc$ and $\wt\Lc_0$ (mentioned in {\bf n$^\text{o}$4} above), and establish conditions under which they are not Liouville (\clmref{clm:l1}, \clmref{clm:reflected}, and \clmref{clm:wtmu}). Finally, in \secref{sec:inf} we first prove the main technical \lemref{lem:lem}: the equivalence of the Liouville property and infiniteness of the step distribution entropy for the considered class of measures on the group $\Lc$ (which, for this class of measures, is also equivalent to the one-sided Liouville property, \thmref{thm:2}). Then, we obtain our main result: the existence of a non-Liouville measure (as noted in {\bf n$^\text{o}$4}, it must have infinite entropy) on the product of two groups, both of whose marginals are Liouville (\thmref{thm:3}). Paying tribute to the constant interest of Anatoly Moiseevich Vershik in the history of mathematics and its reflection in modern times (see \cite{Vershik00r,Vershik10ar,Vershik20r} regarding the topics discussed here), we conclude the article with a brief historical commentary on the origin of the main concepts featured in its title: Liouville's property and Shannon's entropy.

\textbf{Acknowledgements.} I am grateful to R.~I.~Grigorchuk for numerous discussions on self-similar groups and related probabilistic problems which served as a stimulus for considering random walks on group products, and to A.~Erschler for discussing one-sided invariant means (see \remref{rem:im2}). In conclusion, I express my deep gratitude to my teacher Anatoly Moiseevich Vershik, who introduced me to the world of mathematics. The topics of this article also go back to his questions and ideas.

\section{Poisson boundary and Liouville property} \label{sec:1}

\begin{flushright}
\begin{tabular}{l}
\textcyr{Поэзия, должно быть, состоит}
\\
\textcyr{в от{с}ут{с}твии отчетливой границы}
\\
\hfill\textit{\textcyr{И.~Бродский}}. \textit{Post aetatem nostram}
\end{tabular}
\end{flushright}

\subsection{}

The \textsf{random walk} $(G,\mu)$ on a countable group $G$ determined by a probability measure~$\mu$ is the Markov chain with the state space $G$ and transition probabilities
$$
p(g,g') = \mu(g^{-1} g')
$$
invariant under the left action of the group $G$ on itself. In other words, the Markov transitions
\begin{equation} \label{eq:trans}
g \tos^{h\sim\mu} g'=gh
\end{equation}
of the random walk $(G,\mu)$ consist in right multiplication by a $\mu$-distributed random \textsf{increment}~$h$. The resulting \textsf{Markov operator} $P=P_\mu$ acts on functions on the group $G$ by averaging as
\begin{equation} \label{eq:p}
Pf (g) = \sum_h \mu(h) f(gh) \;,
\end{equation}
and its dual operator
\begin{equation} \label{eq:pp}
\tha \mapsto \tha P = \tha * \mu
\end{equation}
acts on measures by applying the Markov transitions \eqref{eq:trans}, i.e., as the right convolution with the measure $\mu$.

Any initial distribution $\tha$ (not necessarily a probability one) determines the associated \textsf{Markov measure} $\Pb_\tha$ on the space $G^{\ZZ_+}$ of \textsf{sample paths} $\gb=(g_0,g_1,g_2,\dots)$, where
$$
g_t = g_0 h_1 h_2 \dots h_t \qquad \forall\,t\in\ZZ_+ \;,
$$
and $(h_t)$ is a sequence of independent $\mu$-distributed increments of the random walk. Thus, the one-dimensional distribution of the measure $\Pb_\tha$ at time $t$ is $\tha P^n=\tha*\mu^{*t}$, where $\mu^{*t}$ denotes the $t$-fold convolution of the measure $\mu$ with itself. Since the space of sample paths is equipped with a natural coordinate-wise action of the group $G$
\begin{equation} \label{eq:action}
g\gb = (gg_0, gg_1, gg_2, \dots) \qquad \forall\,g\in G,\; \gb=(g_0,g_1,g_2,\dots)\in G^{\ZZ_+} \;,
\end{equation}
and the mapping $\tha\mapsto\Pb_\tha$ is equivariant due to the spatial homogeneity of the transition probabilities, the measure $\Pb_\tha$ is the convolution of the initial distribution $\tha$ with the measure $\Pb=\Pb_{\de_e}$ corresponding to the initial distribution concentrated at the identity $e$ of the group~$G$, namely,
$$
\Pb_\tha = \tha * \Pb = \sum_g \mu(g) g\Pb \;,
$$
where $g\Pb=\Pb_g=\Pb_{\de_g}$ is the measure on the space of sample paths issued from a point $g\in G$. The \textsf{counting measure} $\N$ on the group $G$ is preserved by the transition operator \eqref{eq:pp} of the random walk, and therefore the corresponding $\si$-finite measure $\Pb_\N$ on the space of sample paths is invariant under the \textsf{shift}
\begin{equation} \label{eq:shift}
T: (g_0,g_1,g_2,\dots) \mapsto (g_1,g_2,g_3,\dots) \;.
\end{equation}

\subsection{} \label{sec:pois}

Two sample paths $\gb=(g_0,g_1,\dots)$ and $\gb'=(g'_0,g'_1,\dots)$ are called \textsf{(asynchronously) asymptotically equivalent} if they coincide from some moment onwards (specific to each sample path), i.e., if there exist integers $t,t'\geq 0$ such that $g_{t+i}=g'_{t'+i}$ for all $i\geq 0$. In other words, this is the \textsf{orbital equivalence relation} of the shift \eqref{eq:shift}: sample paths $\gb$ and $\gb'$ are equivalent if $T^t\gb=T^{t'}\gb'$ for some $t,t'$. The measurable hull $\Eg$ of this equivalence relation, i.e., the $\si$-algebra of all measurable subsets of the space $(G^{\ZZ_+},\Pb_\N)$ that are unions of equivalence classes ($\Pb_\N$ -- mod~0), is called the \textsf{exit $\sigma$-algebra} of the random walk $(G,\mu)$. Equivalently, $\Eg$ is the completion of the $\si$-algebra of $T$-invariant subsets of the space of sample paths $G^{\ZZ_+}$ with respect to the measure $\Pb_\N$. The arising quotient of the space of sample paths $(G^{\ZZ_+},\Pb_\N)$, i.e., the space of ergodic components of the shift $T$ with respect to the invariant measure $\Pb_\N$, is called the \textsf{Poisson boundary} $\Ps$ of the random walk $(G,\mu)$, and we denote by
$$
\bnd: G^{\ZZ_+} \to \Ps
$$
the corresponding canonical projection. The \textsf{harmonic measure} of an initial distribution $\tha$ is the image
$$
\nu_\tha=\bnd\Pb_\tha
$$
(which is well-defined because the corresponding measure $\Pb_\tha$ on the space of sample paths is absolutely continuous with respect to $\Pb_\N$). We denote by $\nnu$ the \textsf{harmonic measure class} on $\Ps$, i.e., the common class of the harmonic measures $\nu_\tha$ corresponding to the initial distributions $\tha$ with full support $\supp\tha=G$. Note that the image $\bnd\Pb_\N$ of the measure~$\Pb_\N$ itself is trivially infinite, which is why we have to introduce the harmonic measure class (alternatively, one could fix a reference measure from this class). The harmonic measure $\nu_\tha$ determined by any initial probability distribution $\tha$ is, of course, a probability one, and it is dominated by the class $\nnu$, i.e., it is absolutely continuous with respect to $\nnu$.

Since the action \eqref{eq:action} of the group $G$ on the space of sample paths $G^{\ZZ_+}$ commutes with the shift \eqref{eq:shift}, it descends to the Poisson boundary, and the harmonic measure class $\nnu$ is quasi-invariant under this action. We denote by
\begin{equation} \label{eq:nu}
\nu=\bnd\Pb
\end{equation}
the \textsf{primary harmonic measure} determined by the single-point initial distribution concentrated at the identity of the group. Then
$$
\nu_g=\bnd\Pb_g=g\nu
$$
for any initial point $g\in G$, and $\nu_\tha=\tha*\nu$ for any initial distribution $\tha$. The primary harmonic measure $\nu$ is \textsf{$\mu$-stationary} in the sense that
\begin{equation} \label{eq:stat}
\nu = \bnd \Pb = \bnd (T\Pb) = \bnd \Pb_\mu = \sum_g \mu(g) g\nu = \mu*\nu \;.
\end{equation}
As we have already noted, the measure $\nu$ is dominated by the harmonic measure class $\nnu$. If the step distribution $\mu$ is \textsf{non-degenerate} in the sense that the semigroup $\sgr\mu$ generated by its support $\supp\mu$ coincides with the whole group $G$, then, as follows from \eqref{eq:stat}, the measure $\nu$ is quasi-invariant and belongs to the class $\nnu$. In general, even if the measure $\mu$ is \textsf{irreducible} (the group $\gr\mu$ generated by its support coincides with $G$), this domination can be strict (which leads to non-trivial questions, see the discussion in the paper by A.~Erschler and the author \cite[\S~5.D]{Erschler-Kaimanovich23}). It's worth adding that the irreducibility condition (unlike non-degeneracy) is not restrictive, as it can always be satisfied by passing to a smaller group~$\gr\mu$.

The image of the random walk $(G,\mu)$ under a group epimorphism $\pi:G\mapsto \ol G$ is the random walk on the quotient group $\ol G$ with the step distribution $\ol\mu=\pi(\mu)$, and therefore the Poisson boundary $\p_{\ol\mu} \ol G$ is a quotient of the Poisson boundary $\Ps$ (more precisely, the space of ergodic components of the action of the normal subgroup $\ker\pi$). In particular, if the quotient measure $\ol\mu$ is not Liouville, then the original measure $\mu$ is not Liouville either.

\subsection{}

A function $f$ on a group $G$ is called \textsf{$\mu$-harmonic} if $Pf=f$ for the Markov operator $P$ \eqref{eq:p}. Because of the $\mu$-stationarity of the primary harmonic measure $\nu$, any function $\wh f\in L^\infty(\Ps,\nnu)$ gives rise to the $\mu$-harmonic function.
\begin{equation} \label{eq:poisson}
f(g)=\langle \wh f,g\nu\rangle \;.
\end{equation}
Furthermore, the \textsf{ Poisson formula} \eqref{eq:poisson} establishes an isometric isomorphism between the space $L^\infty(\Ps,\nnu)$ and the Banach space $H^\infty(G,\mu)$ of bounded $\mu$-harmonic functions on $G$ endowed with the $\sup$-norm. In this sense, the correspondence \eqref{eq:poisson} is an analogue of the classical Poisson formula for harmonic functions on the unit disk of the complex plane. In particular, the Poisson boundary $\Ps$ is trivial (i.e., is a singleton) if and only if there are no non-constant bounded $\mu$-harmonic functions on the group $G$ \textsf{(the Liouville property)}.

We call a measure $\mu$ on a group $G$ \textsf{Liouville} if the primary harmonic measure $\nu$ is a delta measure, which is equivalent to the absence of non-constant bounded $\mu$-harmonic functions on the semigroup $\sgr\mu$, i.e., on the set of points attainable by the random walk issued from the group identity. It is clear that the same is then true for any initial point. In probabilistic terms (see the Introduction), this means that there is no non-trivial stochastically significant behavior of the random walk at infinity for an arbitrary one-point initial distribution. The convenience of this definition of the Liouville property is that it allows one to drop the irreducibility assumption on the measure $\mu$. It is easy to see (e.g., using the Poisson formula) that the Liouville property for the measure $\mu$ implies the absence of non-constant bounded $\mu$-harmonic functions on the group $\gr\mu$ as well, and therefore  triviality of the Poisson boundary~$\Ps$ is equivalent to the combination of the Liouville property for the measure~$\mu$ with its irreducibility.
In the general case, the Poisson boundary of a Liouville measure $\mu$ is just the homogeneous space of the left cosets of the subgroup~$\gr\mu$.

\subsection{} \label{sec:im}

In more analytical terms, the Liouville property of the measure $\mu$ is equivalent to the strong convergence of the Ces\`aro means
\begin{equation} \label{eq:sit}
\si_t = \frac1t (\mu + \mu^{*2} + \dots + \mu^{*t} )
\end{equation}
of convolutions of the measure $\mu$ to left invariance on the group $\gr\mu$, i.e., to the left \textsf{asymptotic invariance}
\begin{equation} \label{eq:im}
\|g\si_t - \si_t \| \to 0 \qquad \forall\,g\in\gr\mu
\end{equation}
(here $\|\cdot\|$ denotes the total variation, in other words, the $\ell^1$ norm). In particular, if the measure $\mu$ is Liouville, then the group $\gr\mu$ is amenable. Instead of Ces\`aro means, one can also consider the measures obtained by averaging the convolutions $\mu^{*t}$ with the help of any other asymptotically invariant sequence of measures on $\ZZ_+$, for example, the binomial distributions $\bigl(\frac12(\de_0+\de_1)\bigr)^{*t}$, i.e., the sequence of one-dimensional distributions $\mathring{\mu}^{*t}$ of the ``lazy'' random walk with the transition measure $\mathring\mu=\tfrac12(\de_e + \mu)$ which is the half-sum of the initial measure $\mu$ and the delta measure at the identity element, see \cite[Theorem~2.3]{Kaimanovich92}. By passing from asymptotically invariant sequences of measures to invariant means, one can then say that the Liouville property of the measure $\mu$ is equivalent to the fact that some (equivalently, any) Banach limit of the sequence of its convolutions is a left-invariant mean on the group $\gr\mu$.

\begin{rem} \label{rem:lr}
It would be interesting to understand the structure of the set of means on a given group $G$ arising as the Banach limits of the sequences of convolutions $\mu^{*t}$ of a certain irreducible measure $\mu$. This question is of interest even in the situation when the measure~$\mu$ is not Liouville (in particular, when the group $G$ is non-amenable), see the work of A.~Fisher and the author \cite{Kaimanovich-Fisher98}. In such cases, the corresponding means are not invariant but only $\mu$-stationary (for this class, see the work of H. Furstenberg and E. Glasner \cite{Furstenberg-Glasner10}).
\end{rem}

In the opposite direction, the \emph{existence} of an irreducible Liouville measure on any amenable group was established by A. M. Vershik and the author \cite{Vershik-Kaimanovich79r, Kaimanovich-Vershik83} and independently by J.~Rosenblatt \cite{Rosenblatt81}. On the other hand, the characterization of groups on which \emph{every} measure is Liouville (these are the groups that coincide with their hyper-FC-center) was recently completed by J. Frisch, Y. Hartman, O. Tamuz, and P. Vahidi Ferdowsi \cite{Frisch-Hartman-Tamuz-Ferdowsi19}, who proved the existence of non-Liouville non-degenerate measures on all groups without this property (see also further results in this direction in the work of A. Erschler and the author \cite{Erschler-Kaimanovich23}).

\subsection{}

Let us note for further reference another property of Liouville measures, directly derived from their characterization in terms of asymptotic invariance \eqref{eq:im}.

\begin{clm} \label{clm:prod}
The Liouville property for the product measure $\mu\otimes\mu'$ on the product of groups $G \times G'$ is equivalent to both measures-multipliers $\mu$ and $\mu'$ being Liouville simultaneously.
\end{clm}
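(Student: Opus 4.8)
The plan is to work entirely through the analytic criterion \eqref{eq:im}: a measure is Liouville if and only if its Ces\`aro means $\si_t$ are asymptotically left invariant on the group it generates. The one computational input I need is that convolution factorizes over a direct product, $(\mu\otimes\mu')^{*k}=\mu^{*k}\otimes\mu'^{*k}$, so that the Ces\`aro means of $\mu\otimes\mu'$ are the \emph{diagonal} averages $\wt\si_t=\frac1t\sum_{k=1}^t\mu^{*k}\otimes\mu'^{*k}$. Since left translation is an $\ell^1$-isometry, the elements along which a given sequence is asymptotically invariant form a subgroup; factoring $(g,g')=(g,e')(e,g')$, it therefore suffices to test invariance of $\wt\si_t$ along the coordinate elements $(g,e')$, $g\in\gr\mu$, and $(e,g')$, $g'\in\gr\mu'$, separately.

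The forward implication is immediate. The coordinate projection $\pi\colon G\times G'\to G$ is a group epimorphism carrying $\mu\otimes\mu'$ to its marginal $\mu$, so by the quotient principle recalled in \S\ref{sec:pois} (a non-Liouville quotient forces the original measure to be non-Liouville) the Liouville property of $\mu\otimes\mu'$ passes to $\mu$, and symmetrically to $\mu'$.

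For the converse I would assume $\mu,\mu'$ Liouville and estimate the defect of invariance directly. Telescoping together with the cross-norm identity $\|a\otimes b\|=\|a\|\,\|b\|$ gives
\[
\|(g,e')\wt\si_t-\wt\si_t\|=\Bigl\|\tfrac1t\textstyle\sum_{k=1}^t(g\mu^{*k}-\mu^{*k})\otimes\mu'^{*k}\Bigr\|\le\tfrac1t\textstyle\sum_{k=1}^t\|g\mu^{*k}-\mu^{*k}\|,
\]
and likewise for $(e,g')$. Everything thus reduces to showing that the Ces\`aro averages of the \emph{individual} convolution distances $a_k=\|g\mu^{*k}-\mu^{*k}\|$ tend to $0$. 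These $a_k$ are non-increasing, since right convolution by $\mu$ is an $\ell^1$-contraction, so they converge, and their Ces\`aro averages share the same limit.

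The main obstacle is precisely that this limit need not vanish for a Liouville measure. For $\mu=\tfrac12(\de_1+\de_{-1})$ on $\ZZ$, which is Liouville, the measures $g\mu^{*k}$ and $\mu^{*k}$ sit on opposite parity classes, so $a_k\equiv 2$ and the bound above is vacuous: the culprit is periodicity, invisible to the Poisson boundary but seen by the one-step distributions. I would remove it by passing to the lazy measures $\mathring\mu=\tfrac12(\de_e+\mu)$ and $\mathring\mu'$: these are strongly aperiodic and share the bounded harmonic functions of $\mu,\mu'$ (the fixed-point equations $P_\mu f=f$ and $\tfrac12(f+P_\mu f)=f$ coincide), hence are again Liouville, and for a strongly aperiodic Liouville measure the $0$--$2$ law \cite{Derriennic80} forces $\|g\mathring\mu^{*k}-\mathring\mu^{*k}\|\to0$ for every $g\in\gr\mu$, so the telescoping bound now does vanish and $\mathring\mu\otimes\mathring\mu'$ is Liouville. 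The one genuinely delicate step that remains is to transfer Liouvilleness from $\mathring\mu\otimes\mathring\mu'$ back to $\mu\otimes\mu'$, since laziness in the two factors changes the harmonic functions of the product and hence is \emph{not} a mere restatement of the single-factor equivalence; I would settle it by appealing to the insensitivity of the direct-product Poisson boundary to the deterministic period (the factorization recalled, after Molchanov \cite{Molchanov67r}, in the Introduction), or by checking directly that neither side of the asserted equivalence is affected by laziness.
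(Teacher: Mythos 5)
Most of your proposal is sound, and your diagnosis of the periodicity obstacle is exactly right: the forward direction via coordinate quotients is correct, the telescoping bound is correct, and your observation that Liouvilleness only controls $\|g\si_t-\si_t\|$ (the norm of the average) and not $\frac1t\sum_k\|g\mu^{*k}-\mu^{*k}\|$ (the average of the norms) is the real difficulty that a naive reading of \eqref{eq:im} hides. Your lazification step is also legitimate: that Liouvilleness of $\mathring\mu$ forces $\|g\mathring\mu^{*k}-\mathring\mu^{*k}\|\to 0$ is precisely the binomial-averaging statement of \cite[Theorem~2.3]{Kaimanovich92} cited in \S~\ref{sec:im}, so you do obtain that $\mathring\mu\otimes\mathring\mu'$ is Liouville.

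The genuine gap is the final transfer, and your two proposed remedies do not close it. Note that $\mathring\mu\otimes\mathring\mu'=\tfrac14\bigl(\de_{(e,e')}+\mu\otimes\de_{e'}+\de_e\otimes\mu'+\mu\otimes\mu'\bigr)$ is \emph{not} the lazification $\tfrac12\bigl(\de_{(e,e')}+\mu\otimes\mu'\bigr)$ of the product measure: its bounded harmonic functions are those of the hybrid measure $\tfrac13\bigl(\mu\otimes\de_{e'}+\de_e\otimes\mu'+\mu\otimes\mu'\bigr)$, and no operator identity equates the fixed points of $\tfrac14(I+P_1)(I+P_2)$ with those of $P_1P_2$. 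So ``checking directly that neither side is affected by laziness'' is not a formality: it amounts to comparing the Liouville property of two different measures on $G\times G'$ with essentially the same marginals, which is exactly the comparison the paper shows can fail outside the product case (\thmref{thm:3}); in other words, this step carries the full content of \clmref{clm:prod}. Your other escape route is circular: the ``factorization recalled in the Introduction'' for direct and Cartesian products \emph{is} the claim being proven (and its Cartesian analogue), while Molchanov's theorem \cite{Molchanov67r} is the Martin-theoretic machinery the paper invokes only for the converse factorization of minimal harmonic functions. The paper itself proves the claim differently, via the remark following it: the tail boundary of a product of Markov chains is the product of the tail boundaries, and Poisson and tail boundaries coincide for a one-point initial distribution, so the primary harmonic measure of $(G\times G',\mu\otimes\mu')$ is $\nu\otimes\nu'$, a point mass if and only if both $\nu$ and $\nu'$ are. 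To repair your argument, either invoke that general fact (which makes the lazy detour unnecessary), or prove the transfer honestly -- e.g., by conditioning your telescoped sum on the second coordinate, where you will find the time-averages reweighted by $k\mapsto\mu'^{*k}(x')$, and controlling these reweighted averages uniformly in $x'$ is essentially a reproof of the tail-product theorem via the 0--2 law.
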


\begin{rem}
This statement is a special case of the fact that for the product $(G \times G', \mu \otimes \mu')$ of the random walks $(G,\mu)$ and $(G',\mu')$, the primary harmonic measure is the product $\nu \otimes \mu'$ of the primary harmonic measures of each multiplier, which in turn follows from the general result that the tail boundary of the product of two Markov chains is the product of their tail boundaries. Regarding the Poisson boundaries (which are endowed with the harmonic measure classes determined by initial distributions with full support), in general, $\p_{\mu \otimes \mu'} (G \times G') \cong \Ps \times \p_{\mu'} G'$ only if all measures $\mu, \mu', \mu \otimes \mu'$ are non-degenerate. A detailed discussion of the relationship between the Poisson boundary and the tail boundary for general Markov chains and their triviality is contained in \cite{Kaimanovich92}; their coincidence with respect to a single point initial distribution for random walks on groups is a cornerstone of the entropy theory of random walks, see \secref{sec:e}.
\end{rem}

For further information on the Poisson boundary of random walks on groups and of general Markov chains with discrete state spaces, we refer to the papers by A. M. Vershik and the author \cite{Kaimanovich-Vershik83, Kaimanovich92, Kaimanovich00a} and the literature cited therein, as well as to more recent surveys by A. Furman \cite{Furman02}, A. Erschler \cite{Erschler10}, and T. Zheng \cite{Zheng21p}.

\section{Entropy of random walks} \label{sec:e}

\begin{flushright}
\begin{tabular}{l}
\textcyr{Все проходит. Все не вечно.}
\\
\textcyr{Энтропия, друг ты мой!}
\\
\hfill\textit{\textcyr{Т.~Кибиров. Л.~С.~Рубинштейну}}
\end{tabular}
\end{flushright}

\subsection{}

Let us now consider the situation where the step distribution $\mu$ of a random walk on the group $G$ has finite \textsf{entropy}
$$
H(\mu) = - \sum_g \mu(g)\log\mu(g) \;.
$$
In this case the entropies of convolutions of the measure $\mu$ with itself are also finite, and their sequence is subadditive. The resulting limit
$$
h(\mu) = \lim_{t\to\infty} \frac{H(\mu^{*t})}{t}
$$
is called the \textsf{asymptotic entropy} of the measure $\mu$.

\begin{thm}[Vershik -- Kaimanovich \cite{Vershik-Kaimanovich79r,Kaimanovich-Vershik83}, Derriennic \cite{Derriennic80}] \label{thm:e}
A measure $\mu$ with finite entropy $H(\mu)$ is Liouville if and only if $h(\mu)=0$.
\end{thm}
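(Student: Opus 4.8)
The plan is to prove the entropy criterion by relating the asymptotic entropy $h(\mu)$ to the amount of information that the Poisson boundary carries about the random walk, and then to show that this information vanishes precisely when the boundary is trivial. The fundamental identity to establish is the \emph{Shannon-McMillan-Breiman type formula} expressing $h(\mu)$ as the relative entropy (Kullback-Leibler divergence) of the harmonic measure seen from a shifted sample path. Concretely, I would first record the subadditivity argument showing that the limit $h(\mu)=\lim_t H(\mu^{*t})/t$ exists (via Fekete's lemma), and that it also equals the decrement $h(\mu)=\lim_t \bigl(H(\mu^{*t})-H(\mu^{*(t+1)})\bigr)$, i.e.\ the conditional entropy of $g_{t+1}$ given $g_t$ minus a correction, so that $h(\mu)$ measures the asymptotic per-step loss of predictability.

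The key step is to interpret $h(\mu)$ as the entropy of the shift $T$ on the path space relative to the exit $\sigma$-algebra $\Eg$. The idea is that $h(\mu)$ equals the conditional entropy $H(g_1 \mid \Eg)$ of the first increment given the tail, so that $h(\mu)=0$ forces the increments to be asymptotically determined by the boundary data, while $h(\mu)>0$ produces genuine residual randomness at infinity. I would make this precise through the \emph{entropy of the conditional walk}: conditioning the random walk on its exit point $\mathrm{bnd}(\gb)=\xi\in\Ps$ yields (for $\nu$-a.e.\ $\xi$) a new Markov chain, and the averaged Radon-Nikodym derivatives of the conditioned transition probabilities against the unconditioned ones give exactly $h(\mu)$. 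This links the purely entropic quantity to the Poisson boundary.

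With that identity in hand, both implications follow. If $\mu$ is Liouville then $\Ps$ is a singleton (up to the coset structure), the conditioning is trivial, the Radon-Nikodym derivatives are identically $1$, and the divergence computing $h(\mu)$ is zero; more directly, the Liouville property is equivalent by \S\,\ref{sec:im} to the asymptotic invariance \eqref{eq:im} of the Ces\`aro means $\si_t$, from which the per-step entropy increment $H(\mu^{*(t+1)})-H(\mu^{*t})$ can be shown to tend to zero, giving $h(\mu)=0$. Conversely, if $h(\mu)=0$ I would show the existence of a nonconstant bounded harmonic function must fail: a positive divergence $h(\mu)$ would let one separate the harmonic measures $g\nu$ for distinct $g$ by a fixed positive amount in total variation, contradicting the asymptotic invariance that $h(\mu)=0$ entails; equivalently, $h(\mu)=0$ makes the tail $\sigma$-algebra trivial, hence $\Ps$ trivial and $H^\infty(G,\mu)$ constant by the Poisson formula \eqref{eq:poisson}.

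The main obstacle, and the place where the finiteness hypothesis $H(\mu)<\infty$ is genuinely used, is the passage from the combinatorial entropy increments to the boundary-theoretic conditional entropy: one must justify that the limiting differential/relative entropy is well-defined and finite, that the Shannon-McMillan-Breiman convergence holds along sample paths, and that conditioning on $\Eg$ interacts correctly with the $\sigma$-finite invariant measure $\Pb_\N$. Controlling these relative entropies — in particular ruling out the degenerate behavior that the rest of the paper shows \emph{can} occur when $H(\mu)=\infty$ — is the crux; the two implications themselves are comparatively soft once the entropy equals the conditional entropy given the boundary.
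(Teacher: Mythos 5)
The paper itself contains no proof of \thmref{thm:e}: the theorem is imported from \cite{Vershik-Kaimanovich79r,Kaimanovich-Vershik83,Derriennic80}, so your outline must be measured against the classical entropy-theory argument, whose architecture you do reproduce correctly: subadditivity and existence of $h(\mu)$ via Fekete; the monotone decrements $H(\mu^{*(t+1)})-H(\mu^{*t})\downarrow h(\mu)$; identification of this limit with a boundary quantity; the two implications read off from that identity; and (implicitly) the coincidence of the tail and exit $\sigma$-algebras for the one-point initial distribution, which the paper explicitly flags as a cornerstone of the theory and which you should not gloss over when you pass between $\Eg$ and ``the tail.'' Your localization of where $H(\mu)<\infty$ enters is also essentially right: it is what makes all the entropies $H(\mu^{*t})$ and their differences finite, so that the conditional-entropy bookkeeping below is legitimate (full Shannon--McMillan--Breiman convergence along sample paths is not needed for the Liouville criterion, only for the stronger identification results).

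There is, however, a genuine error in your key step: the central identity is stated backwards. By the Markov property $H(g_1\mid g_{t+1},g_{t+2},\dots)=H(g_1\mid g_{t+1})$, and $H(\mu)-H(g_1\mid g_{t+1})=H(\mu^{*(t+1)})-H(\mu^{*t})\downarrow h(\mu)$, so the correct formula is $h(\mu)=H(\mu)-H(g_1\mid\Eg)$, i.e.\ $h(\mu)$ is the \emph{mutual information} $I(g_1;\Eg)$ between the first increment and the boundary, not the conditional entropy $H(g_1\mid\Eg)$ as you wrote. Accordingly your interpretation is inverted: $h(\mu)=0$ means the boundary carries \emph{no} information about the increments --- one then shows $\Eg$ is independent of each finite segment $(g_1,\dots,g_k)$, hence of the whole path $\sigma$-algebra, hence trivial since it is contained in it --- whereas ``increments asymptotically determined by the boundary data'' is the opposite regime (it is the flavor of statement used to prove \emph{maximality} of a boundary, not triviality). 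Your subsequent conclusions (Liouville $\Rightarrow$ conditioned transitions equal unconditioned ones $\Rightarrow$ the Kullback--Leibler expression for $h(\mu)$ vanishes; and $h(\mu)=0$ $\Rightarrow$ tail trivial $\Rightarrow$ Liouville via the Poisson formula \eqref{eq:poisson}) are the correct ones and are consistent only with the corrected identity, so the slip is repairable; but as literally written your ``key step'' would prove the wrong equivalence. A secondary soft spot: total-variation asymptotic invariance of the Ces\`aro means \eqref{eq:im} does not by itself yield that the entropy increments tend to zero (total-variation closeness does not control entropy differences without extra uniform-integrability input); the clean route for that direction is the boundary one just described.
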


\subsection{}

Let us recall that the \textsf{opposite (reflected, reversed) measure} $\check\mu$ is the image of a measure~$\mu$ on a group $G$ under the operation of group inversion:
\begin{equation} \label{eq:inv}
\check\mu(g) = \mu\left( g^{-1} \right) \quad \forall\,g\in G \;.
\end{equation}
The random walk $(G,\check\mu)$ is the reversal of the original random walk with respect to the invariant counting measure $\N$. Obviously, the entropies $H(\mu)$ and $H(\check\mu)$ of the original and of the reversed measures always coincide. Furthermore, since the inversion $\mu \mapsto \check\mu$ is an anti-automorphism of the group $\ell^1$ algebra, the entropies of the respective convolutions are also equal, and therefore, the asymptotic entropies $h(\mu)$ and $h(\check\mu)$ coincide as well. Thus, \thmref{thm:e} leads to a rather unexpected property.

\begin{prp}[\cite{Kaimanovich83ar,Kaimanovich-Vershik83}] \label{prp:1}
If $H(\mu)<\infty$, then the measure $\mu$ and the reflected measure $\check\mu$ are either both Liouville or both non-Liouville.
\end{prp}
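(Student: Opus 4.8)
The plan is to reduce the statement entirely to \thmref{thm:e}, exploiting the reflection symmetry of the asymptotic entropy already recorded in the paragraph preceding the proposition. Since the invariant that detects the Liouville property is $h(\mu)$, and this invariant is manifestly insensitive to the inversion $\mu\mapsto\check\mu$, the two measures must be Liouville or non-Liouville in unison.

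First I would observe that the hypothesis $H(\mu)<\infty$ transfers to $\check\mu$: the substitution $g\mapsto g^{-1}$ is a bijection of $G$, so $H(\check\mu)=H(\mu)<\infty$, and hence \thmref{thm:e} is applicable to each of the two measures. Next I would verify the key identity $h(\check\mu)=h(\mu)$. The point is that inversion is an anti-automorphism of $\ell^1(G)$, so that $\check{(\mu^{*t})}=\check\mu^{*t}$ for every $t$ (the order of the factors reverses, but all factors are equal). Applying the entropy invariance of the previous step at each level gives $H(\check\mu^{*t})=H(\mu^{*t})$; dividing by $t$ and passing to the limit yields $h(\check\mu)=h(\mu)$.

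It then remains only to chain the equivalences: by \thmref{thm:e}, $\mu$ is Liouville iff $h(\mu)=0$, and $\check\mu$ is Liouville iff $h(\check\mu)=0$; since $h(\mu)=h(\check\mu)$, the two conditions hold or fail together, which is exactly the asserted dichotomy.

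As for the main obstacle, there is essentially none at this level, and that is precisely the point worth stressing. All of the substance is concentrated in \thmref{thm:e}, which converts the qualitative Liouville dichotomy into the single numerical invariant $h(\mu)$; once that translation is available, the left--right symmetry of the Liouville property becomes elementary, because $h(\mu)$ sees no difference between a measure and its reflection. The genuinely delicate phenomenon, and the subject of the rest of the paper, is that this symmetry can break down as soon as the finiteness assumption $H(\mu)<\infty$ is dropped, so that \thmref{thm:e} no longer applies and the equality $h(\mu)=h(\check\mu)$ loses its diagnostic force.
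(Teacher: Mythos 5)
Your proposal is correct and follows essentially the same route as the paper: the entropy equality $H(\check\mu^{*t})=H(\mu^{*t})$ via the anti-automorphism property of inversion (with all convolution factors equal, so the reversal of order is harmless), hence $h(\check\mu)=h(\mu)$, and then \thmref{thm:e} applied to both measures. Your closing remark about where the substance lies --- and why the symmetry can fail when $H(\mu)=\infty$ --- matches the paper's own framing exactly.
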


In light of the discussion from \secref{sec:1}, the Liouville proverty of the reflected measure $\check\mu$ is equivalent to the left asymptotic invariance of the Ces\`aro averages of the convolutions $\check\mu^{*t}$ with respect to the group $\gr\check\mu=\gr\mu$, or equivalently, to the right asymptotic invariance of the Ces\`aro averages $\si_t$ \eqref{eq:sit} of the convolutions of the original measure $\mu$. Thus, we have the following:

\begin{cor}[\cite{Kaimanovich83ar,Kaimanovich-Vershik83}]
If $H(\mu)<\infty$, then the following properties of asymptotic invariance of the Ces\`aro averages of the convolution powers of the measure $\mu$ with respect to the action of the group $\gr\mu$ are equivalent:
\begin{itemize}
\item[(i)]
left asymptotic invariance;
\item[(ii)]
right asymptotic invariance;
\item[(iii)]
bilateral (i.e., both left and right) asymptotic invariance.
\end{itemize}
\end{cor}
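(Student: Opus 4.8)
The plan is to translate each of the three invariance conditions into the Liouville property of either $\mu$ or $\check\mu$, and then to invoke \prpref{prp:1}, which is the only substantive input. By the characterization recalled in \S~\ref{sec:im}, the Liouville property of a measure is equivalent to the left asymptotic invariance of the Ces\`aro averages of its convolution powers on the group it generates. Applied to $\mu$ this says that (i) holds if and only if $\mu$ is Liouville; applied to $\check\mu$ it says that the left asymptotic invariance of the averages $\frac1t(\check\mu+\check\mu^{*2}+\dots+\check\mu^{*t})$ on $\gr\check\mu=\gr\mu$ is equivalent to $\check\mu$ being Liouville.

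It remains to identify the latter with condition (ii). First I would note that inversion \eqref{eq:inv} is an anti-automorphism of the $\ell^1$-algebra of $G$, so that $\check\mu^{*k}$ is the reflection of $\mu^{*k}$ for every $k$, and hence the Ces\`aro average $\frac1t(\check\mu+\dots+\check\mu^{*t})$ is precisely the reflection $\check\si_t$ of $\si_t$. Since reflection is an isometry of $(\ell^1(G),\|\cdot\|)$ that carries left translation into right translation, the left asymptotic invariance $\|g\check\si_t-\check\si_t\|\to 0$ of $\check\si_t$ is equivalent to the right asymptotic invariance of $\si_t$, i.e.\ to condition (ii). This is exactly the dictionary stated in the paragraph preceding the corollary, so no new computation is needed.

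Putting these together, (i) is equivalent to $\mu$ being Liouville and (ii) is equivalent to $\check\mu$ being Liouville. Since $H(\check\mu)=H(\mu)<\infty$, \prpref{prp:1} asserts that $\mu$ and $\check\mu$ are simultaneously Liouville or non-Liouville, whence (i) $\Leftrightarrow$ (ii). Finally, (iii) is by definition the conjunction of (i) and (ii); as these two conditions are now seen to be equivalent, each of them coincides with their conjunction, giving (i) $\Leftrightarrow$ (ii) $\Leftrightarrow$ (iii). The main (and essentially only) obstacle is concentrated in \prpref{prp:1} itself, which in turn rests on \thmref{thm:e} together with the equality $h(\check\mu)=h(\mu)$ of asymptotic entropies; once the finite-entropy hypothesis is dropped this equivalence can fail, and the rest of the paper is devoted to exploiting precisely this failure.
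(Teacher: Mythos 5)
Your proposal is correct and follows essentially the same route as the paper: the paragraph immediately preceding the corollary makes exactly your identification --- the Liouville property of $\check\mu$ is equivalent, via the reflection $\si_t\mapsto\check\si_t$ exchanging left and right translations, to the right asymptotic invariance of $\si_t$ --- and the corollary is then read off from \prpref{prp:1}, with (iii) the conjunction of (i) and (ii). Your closing remark about the whole burden resting on \prpref{prp:1} (hence on \thmref{thm:e} and $h(\check\mu)=h(\mu)$), and about its failure for infinite entropy, matches the paper's own framing (cf.\ \remref{rem:inv} and \thmref{thm:2}).
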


\begin{rem} \label{rem:inv}
This statement can also be reformulated directly in terms of the means on the group $\gr\mu$ obtained by applying Banach limits to the sequence of convolutions $\mu^{*t}$ (see \secref{sec:im}): if $H(\mu)<\infty$, then any such mean is either invariant on both sides or not invariant on either side.
\end{rem}

\subsection{}

Another interesting consequence of the entropy theory is related to random walks on group products (cf. \clmref{clm:prod}). Let $\wt\mu$ be a probability measure on the product $\wt G = G \times G'$ of two countable groups (for simplicity, we consider the case of two multipliers only). Denote by $\mu$ and $\mu'$ its marginal distributions, i.e., the results of applying to the measure $\wt\mu$ the coordinate projections onto the groups $G$ and $G'$, respectively. Then, as is well known (for example, see V. A. Rokhlin's article \cite[\S~4.3 and \S~4.8]{Rohlin67r}),
\begin{equation} \label{eq:Hi}
H(\mu), H(\mu') \le H(\widetilde{\mu}) \le H(\mu) + H(\mu') \;,
\end{equation}
and the same inequalities hold for the corresponding convolution powers as well. Consequently,
\begin{equation} \label{eq:hi}
h(G,\mu), h(G',\mu') \le h(\widetilde{G},\widetilde{\mu}) \le h(G,\mu) + h(G',\mu') \;.
\end{equation}
By the entropy criterion from \thmref{thm:e}, we then have:

\begin{prp} \label{prp:2}
If the probability measure $\wt\mu$ on the product $\wt G=G\times G'$ of two groups $G$ and~$G'$ has finite entropy $H(\wt\mu)$, then it is Liouville if and only if both its marginal distributions are Liouville.
\end{prp}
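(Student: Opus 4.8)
The plan is to route the entire equivalence through the asymptotic-entropy criterion of \thmref{thm:e}, using the sandwich inequality \eqref{eq:hi} as the bridge between the product walk and its two marginals. Since the only quantity that needs to be controlled is the asymptotic entropy, the argument reduces to the elementary fact that a nonnegative number trapped between $0$ and $0$ must vanish.

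First I would check that the marginals themselves have finite entropy, so that \thmref{thm:e} even applies to them. This is immediate from the left-hand inequalities in \eqref{eq:Hi}: as $H(\mu), H(\mu') \le H(\wt\mu) < \infty$, all three of $\mu$, $\mu'$, $\wt\mu$ have finite entropy, so all three asymptotic entropies are defined, and each is nonnegative (being a limit of the nonnegative quantities $H(\cdot^{*t})/t$). Next I would feed this into \eqref{eq:hi}, which reads
$$h(\mu),\, h(\mu') \le h(\wt\mu) \le h(\mu) + h(\mu') \;.$$
If $\wt\mu$ is Liouville then $h(\wt\mu)=0$, and the left inequality pins $h(\mu)=h(\mu')=0$; if instead both marginals are Liouville then $h(\mu)=h(\mu')=0$, and the right inequality forces $h(\wt\mu)=0$. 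Applying \thmref{thm:e} in each case converts these entropy statements back into the Liouville property and closes the loop.

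It is worth isolating which direction actually consumes the finiteness hypothesis. The implication ``$\wt\mu$ Liouville $\Rightarrow$ marginals Liouville'' needs no entropy at all: each marginal is the image of $\wt\mu$ under a coordinate projection $\wt G \to G$ or $\wt G \to G'$, and by the general epimorphism principle recalled in \secref{sec:pois} a quotient of a trivial Poisson boundary is again trivial. The substance therefore lies entirely in the reverse implication ``marginals Liouville $\Rightarrow$ $\wt\mu$ Liouville,'' which is exactly the right-hand, subadditivity bound $h(\wt\mu) \le h(\mu)+h(\mu')$ in \eqref{eq:hi}.

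There is no real obstacle internal to this proof; all the work has been off-loaded onto \thmref{thm:e} and onto the information-theoretic inequalities \eqref{eq:Hi}--\eqref{eq:hi} (the monotonicity and subadditivity of Shannon entropy under passage to marginals, applied to every convolution power). The single point one must not skip is verifying finiteness of the marginal entropies before invoking the asymptotic entropy at all---and this is precisely the hypothesis whose removal breaks the equivalence, as the paper demonstrates in \thmref{thm:3}.
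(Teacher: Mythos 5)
Your proposal is correct and takes essentially the same route as the paper: the sandwich inequality \eqref{eq:hi}, obtained by applying \eqref{eq:Hi} to every convolution power, combined with the entropy criterion of \thmref{thm:e} (the paper's proof is literally ``By the entropy criterion from \thmref{thm:e}, we then have'' the proposition). Your added observation that the implication ``$\wt\mu$ Liouville $\Rightarrow$ marginals Liouville'' already follows, with no entropy hypothesis, from the epimorphism principle recalled at the end of \secref{sec:pois} is a correct refinement that the paper leaves implicit.
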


\begin{rem} \label{rem:gen}
This proposition (essentially with the same proof) holds in all other situations where the entropy theory is applicable, namely when there is a natural probability measure preserving transformation on the corresponding path space. This is the case, for example, for random walks in random environment \cite{Kaimanovich90a}, on equivalence relations \cite{Kaimanovich-Sobieczky12}, or in the most general setting of invariant Markov chains with a finite stationary measure on groupoids with countable orbits \cite{Kaimanovich05a}. By considering conditional random walks or Poisson extensions (bundles) of the corresponding groupoids \cite[\S~5.A]{Kaimanovich05a}, it follows that, assuming finiteness of the entropy of the measure $\wt\mu$, the Poisson boundary
$\p_{\wt\mu}\wt G$ is fully described by the boundary behavior of the two quotient walks $(G,\mu)$ and $(G',\mu')$. The latter result can also be obtained by a direct application of author's entropy criterion for identification of the Poisson boundary \cite{Kaimanovich00a}, as it was done by A. Erschler and J. Frisch in \cite[Claim~4.1]{Erschler-Frisch22p}.
\end{rem}

\begin{rem}
It is well known (for example, \cite[\S~4.8]{Rohlin67r}) that the right-hand side inequality in~\eqref{eq:Hi} holds as an equality if and only if the measure $\wt\mu$ is the product $\mu\otimes\mu'$ of its marginals. It is natural to ask the same question about the additivity of asymptotic entropy, i.e., about the conditions under which the right-hand side inequality in \eqref{eq:hi} becomes an equality:
$$
h(\widetilde{G},\widetilde{\mu}) = h(G,\mu) + h(G',\mu') \;.
$$
It is reasonable to assume that this property should be related to the possibility of decomposing the primary harmonic measure of the random walk $(\wt G,\wt\mu)$ (or, in a weaker form, its measure class) into the product of the primary harmonic measures of the quotient random walks $(G,\mu)$ and $(G',\mu')$ (respectively, of their measure classes).
\end{rem}

Our goal is to present examples showing that \prpref{prp:1} and \prpref{prp:2} are no longer valid when the entropy finiteness assumption is dropped.

\section{Wreath products, dynamical configurations, etc.} \label{sec:3}

\begin{flushright}
\begin{tabular}{l}
\textcyr{Послушайте!}
\\
\textcyr{Ведь, если звезды зажигают --}
\\
\textcyr{значит — это кому-нибудь нужно?}
\\
\hfill \textit{\textcyr{В.~Маяковский. Послушайте!}}
\end{tabular}
\end{flushright}

\subsection{}

Let us denote by
$$
\Phi = \bigoplus_{-\infty}^\infty \ZZ_2 = \fun(\ZZ,\ZZ_2)
$$
the countable direct sum of copies of the two-element group $\ZZ_2=\{0,1\}$ (addition $\!\mod 2$) indexed by integers, i.e., the group of functions (\textsf{configurations}) $\ph:\ZZ\to\ZZ_2$ with finite \textsf{support}
$$
\supp\ph=\ph^{-1}(1)
$$
with respect to the pointwise addition operation. The group $\Phi$ is generated by the one-point configurations (``$\de$-functions'') $\eps_n$ concentrated at the points $n\in\ZZ$, and its identity is the \textsf{empty configuration} $\upth$, i.e., the function identically equal to zero. The shifts
\begin{equation} \label{eq:tn}
(T^n\ph)(z)=\ph(z-n) \;, \qquad n\in\ZZ \;,
\end{equation}
are automorphisms of $\Phi$, and
$$
T^n\eps_m=\eps_{n+m} \qquad\forall\,n,m\in\ZZ \;.
$$
Of course, the support of the configuration $T^n\ph$ is the shift of the support of $\ph$:
\begin{equation} \label{eq:supp}
\supp T^n \ph = n + \supp\ph \qquad\forall\, n\in\ZZ,\; \ph\in\Phi \;.
\end{equation}

The resulting semidirect product
\begin{equation} \label{eq:lc}
\Lc = \ZZ \sd \Phi = \ZZ \wr \ZZ_2
\end{equation}
is called the \textsf{direct (restricted) wreath product} of the active group $\ZZ$ with the passive group~$\ZZ_2$ (in contrast to the group-theoretic custom, in our notation the active group stands on the left). The group operation in $\Lc$ is
$$
(n_1,\ph_1)(n_2,\ph_2) = (n_1+n_2, \ph_1 + T^{n_1}\ph_2) \;.
$$
We will assume that the groups $\ZZ$ and $\Phi$ are embedded into $\Lc$ by means of the mappings
$$
n\mapsto (n,\upth) \;, \qquad \ph\mapsto (0,\ph) \;.
$$
Thus, the representation $(n,\ph)$ of elements of the group $\Lc$ determines its unique decomposition into the product $\Phi\ZZ$, the subgroup $\Phi$ is a normal subgroup of $\Lc$, and the resulting homomorphism takes the form
$$
\Lc \to \ZZ\cong\Lc/\Phi \;, \qquad (n,\ph) \mapsto n \;.
$$
It is also worth noting that the group $\Lc$ is generated by the generator of the group $\ZZ$ and the singleton configuration at $0$
\begin{equation} \label{eq:ep}
\eps=\eps_0\in\Phi \;.
\end{equation}

\begin{rem}
The history of wreath products dates back almost a hundred years (see the survey of C. Wells \cite[Section 16]{Wells76}), and the modern terminology and notation originate from P. Hall \cite[Section 1.4]{Hall54}. As for functional and stochastic analysis on groups, the idea of using wreath products was first suggested by A. M. Vershik, who considered in \cite{Vershik73r} the growth of F{\o}lner sets for the group $\ZZ\wr\ZZ$. The family of wreath products $\ZZ^d\wr\ZZ_2$ was then used by A. M. Vershik and the author \cite{Vershik-Kaimanovich79r, Kaimanovich83ar, Kaimanovich-Vershik83} to construct a number of new examples in the theory of random walks on groups. Although the ``lamp'' interpretation of wreath products with the passive group $\ZZ_2$ was mentioned by A. M. Vershik on numerous occasions at that time, a more formal name \emph{group of dynamic configurations} was chosen in \cite{Kaimanovich-Vershik83}.

Starting from the 1990s, the concise and expressive term \emph{lamplighter groups} has been used in the English language literature (apparently, for the first time by R. Lyons, R. Pemantle, and Y. Peres \cite{Lyons-Pemantle-Peres96a}): the components $n\in\ZZ$ and $\ph\in\Phi$ of the element $(n,\ph)$ of the group $\Lc$ describe, respectively, the position of the lamplighter and the configuration of the lit lamps on the integer line $\ZZ$. The Russian terminology has not stabilized; among the proposed options are \textcyr{\emph{лампочная группа}, \emph{группа мигающих лампочек}} and \textcyr{\emph{группа фонарщика}} (cf. the somewhat cumbersome French term \emph{groupes de l'allumeur de réverbères}).
\end{rem}

\subsection{}

Let us denote by
$$
\wt\Lc = \Lc\times\Lc
$$
the product of the group $\Lc$ by itself. Below we will need the ``semi-diagonal'' subgroup $\wt\Lc$ with a common $\ZZ$-component, i.e., the semidirect product
\begin{equation} \label{eq:l0}
\wt\Lc_0 = \ZZ \sd (\Phi\times\Phi) \;,
\end{equation}
determined by the diagonal action of the group $\ZZ$ on $\Phi\times\Phi$ by shifts \eqref{eq:tn}. The elements of the group $\wt\Lc_0$ are triples $(n,\ph,\ph')$, and the group operation takes the form
$$
(n_1,\ph_1,\ph'_1)(n_2,\ph_2,\ph'_2) = (n_1+n_2, \ph_1 + T^{n_1}\ph_2, \ph'_1+T^{n_1}\ph'_2) \;.
$$
In addition to the natural ``coordinate'' homomorphisms
\begin{equation} \label{eq:pi}
\pi: \wt\Lc_0 \to \Lc \;, \qquad \pi(n,\ph,\ph') = (n,\ph)
\end{equation}
and
\begin{equation} \label{eq:pi1}
\pi': \wt\Lc_0 \to \Lc \;, \qquad \pi(n,\ph,\ph') = (n,\ph') \;,
\end{equation}
there is also a ``combined'' or ``difference'' homomorphism
\begin{equation} \label{eq:pid}
\ol\pi: \wt\Lc_0 \to \Lc \;, \qquad \ol\pi(n,\ph,\ph')=(n,\ph+\ph')
\end{equation}
(we recall that all elements of the group $\Phi$ have order 2, so $\ph+\ph'=\ph-\ph'$ for any two configurations $\ph,\ph'\in\Phi$).

\pagebreak

\section{Random walks on the group $\Lc$} \label{sec:4}

\begin{flushright}
\begin{tabular}{l}
\textcyr{Мы люди неплохие,}
\\
\textcyr{Как вечер, мы в пути,}
\\
\textcyr{Фонарщики лихие,}
\\
\textcyr{Волшебники почти.}
\\
\hfill\textit{\textcyr{Б.~Окуджава. Песня фонарщиков}}
\end{tabular}
\end{flushright}

\subsection{}

Let us denote by
$$
\Phi_n^m=\langle \eps_i \rangle_{i=n}^m
$$
the finite subgroup of the group $\Phi$ consisting of those configurations whose support is contained in the integer interval $[n\,..\,m]$. We define the sequence
\begin{equation} \label{eq:kn}
\begin{aligned}
K_n &= \Phi_0^n \sm \Phi_0^{n-1}
=\eps_n + \Phi_0^{n-1} \\
&= \{\ph\in\Phi: \supp\,\ph\subset [0\,..\,n]\;\text{and}\; \ph(n)=1\} \;, \qquad n\ge 1 \;,
\end{aligned}
\end{equation}
of pairwise disjoint finite subsets of $\Phi$; note that
\begin{equation} \label{eq:2n}
\Phi_0^n=2^{n+1} \;, \qquad |K_n|=2^n \;.
\end{equation}
Let us denote by
\begin{equation} \label{eq:ka}
\ka_n=\Unif(K_n)
\end{equation}
the corresponding uniform probability measures on the sets $K_n\subset\Phi$. Now, let us fix a probability measure
\begin{equation} \label{eq:al}
\al=(\al_n)_{n\ge 1}
\end{equation}
on the set of positive integers; it gives rise to the probability measure
\begin{equation} \label{eq:la}
\la = \la_\al = \sum_{n=1}^\infty \al_n \ka_n
\end{equation}
on $\Phi$, which is the weighted average of the measures $\ka_n$ with weights $\al_n$. Finally, we define the probability measure
\begin{equation} \label{eq:mu}
\mu = \mu_\al = \de_{-1}\otimes\la \;.
\end{equation}
on the group $\Lc$.

By the definition of the measure $\mu$, the sample paths $\gb=(g_t)$ of the random walk $(\Lc,\mu)$ issued from the group identity have the form
\begin{equation} \label{eq:paths}
g_t = h_1h_2\dots h_t = (-1,\ph_1)(-1,\ph_2)\dots (-1,\ph_t) = (-t,\psi_t) \;,
\end{equation}
where $h_t=(-1,\ph_t)$ are independent $\mu$-distributed increments of the random walk, i.e., $(\ph_t)$ is a sequence of independent $\la$-distributed $\Phi$-valued random variables, and
\begin{equation} \label{eq:psi}
\psi_t = \ph_1 + T^{-1} \ph_2 + \dots + T^{-(t-1)} \ph_t \;.
\end{equation}

In other words, the movement of the lamplighter itself (i.e., of the $\ZZ$-component of the random walk) is deterministic, and at time $t$ its position is $-t$, while the configuration of the lamps (the $\Phi$-component $\psi_t$ of the random walk) is random. Namely, when passing from time $t$ to time $t+1$, the lamplighter samples a random configuration $\ph_{t+1}$ (or equivalently, a random subset $\supp\ph_{t+1}\subset\ZZ$) according to the distribution $\la$, and then changes the states of the lamps around him at the points whose coordinates relative to lamplighter's position lie in $\supp\ph_{t+1}$ (i.e., at the points whose absolute positions are described by the shift $T^{-t}\ph_{t+1}$), so that
\begin{equation} \label{eq:step}
\psi_{t+1} = \psi_t + T^{-t} \ph_{t+1} \;,
\end{equation}
after which the lamplighter himself moves by $-1$ and transitions from point $-t$ to point $-t-1$.

\subsection{}

Now let us look at the Liouville property for the measures described above.

\begin{clm} \label{clm:l1}
If the measure $\al$ \eqref{eq:al} has finite support, then the corresponding measure $\mu$ \eqref{eq:mu} on the group $\Lc$ is non-Liouville.
\end{clm}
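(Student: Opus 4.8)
\emph{Reduction to a positive entropy estimate.} The plan is to exploit that finite support of $\al$ forces $\mu$ to have finite entropy, and then to invoke the entropy criterion. Suppose $\supp\al\subseteq[1\,..\,N]$. Then $\la$ is carried by the finite set $\bigcup_{n=1}^{N}K_n\subset\Phi_0^{N}$, so $\mu=\de_{-1}\otimes\la$ is a \emph{finitely supported} measure on $\Lc$ and in particular $H(\mu)<\infty$. By \thmref{thm:e} it therefore suffices to show $h(\mu)>0$. Since the $\ZZ$-coordinate of the sample path is deterministic ($g_t=(-t,\psi_t)$ by \eqref{eq:paths}), the measure $\mu^{*t}$ is concentrated on the single coset $\{-t\}\times\Phi$, whence $H(\mu^{*t})=H(\psi_t)$ and $h(\mu)=\lim_t H(\psi_t)/t$. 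So I only need a linear lower bound $H(\psi_t)\ge ct$ with $c>0$.

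\emph{Stabilization of the configuration.} Each increment $\ph_i$ is supported in $[0\,..\,N]$, so its shifted copy $T^{-(i-1)}\ph_i$ alters only the lamps in $[-(i-1)\,..\,N-(i-1)]$; hence a lamp at position $z$ is touched only at the steps $i\in[1-z\,..\,N+1-z]$ and never again after step $N+1-z$. In particular every position $z\le 0$ is already \emph{finalized} by time $t$ as soon as $t\ge N+1-z$, so for $z\in[N+1-t\,..\,0]$ one has $\psi_t(z)=\psi_\infty(z):=\sum_{i\ge1}\ph_i(z+i-1)$, and consequently $H(\psi_t)\ge H\bigl((\psi_t(z))_{z\in[N+1-t\,..\,0]}\bigr)$.

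\emph{Extracting linearly many independent bits.} Write $n_i=\max\supp\ph_i$ for the scale selected from $\al$ by the $i$-th increment; conditionally on the whole sequence $(n_i)$ the entries $\ph_i(j)$, $0\le j\le n_i-1$, are independent and uniform in $\ZZ_2$, while $\ph_i(n_i)=1$. Since a given uniform bit $\ph_i(j)$ enters $\psi_\infty(z)$ for the \emph{single} position $z=j-i+1$, the variables $(\psi_\infty(z))_z$ are, conditionally on $(n_i)$, mutually independent, and $\psi_\infty(z)$ is uniform as soon as it receives at least one such bit. Now $\al$ is supported on positive integers, so $n_i\ge1$ and $\ph_i(0)$ is always a genuine uniform bit, feeding position $z=1-i$. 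Letting $i$ run through $1,\dots,t-N$ produces the $t-N$ distinct positions $z\in[N+1-t\,..\,0]$, each guaranteed to carry a uniform bit. Hence, for \emph{every} realization of $(n_i)$, these $t-N$ coordinates are conditionally independent and uniform; as conditioning cannot increase entropy, $H\bigl((\psi_\infty(z))_{z\in[N+1-t\,..\,0]}\bigr)\ge(t-N)\log 2$. Combining with the previous step, $H(\psi_t)\ge(t-N)\log 2$, so $h(\mu)\ge\log 2>0$ and $\mu$ is non-Liouville by \thmref{thm:e}.

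\emph{Main obstacle.} The one delicate point is the coupling introduced in the general finite-support case by the random scales $n_i$: all bits produced by a single increment share the same $n_i$, so the boundary values $\psi_\infty(z)$ at different positions are a priori correlated. Conditioning on the entire sequence $(n_i)$ is exactly what decouples them and reduces the problem to the transparent independent-bit picture that holds outright when $\al=\de_N$; the always-present bit $\ph_i(0)$ is what makes the resulting bound uniform over all realizations of $(n_i)$. I expect this conditioning to be the crux of the argument, the finalization bookkeeping and the reduction to $h(\mu)$ being routine.
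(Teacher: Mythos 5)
Your proof is correct, but it takes a genuinely different route from the paper's. The paper uses the same stabilization observation as your second step --- the lamplighter drifts deterministically to $-\infty$ while the increments have range bounded by $M=\max\supp\al$, so the values $\psi_t(z)$ stabilize pointwise --- but then concludes directly, without any entropy: it cites \cite[Theorem~3.3]{Kaimanovich91} to the effect that the limit configuration $\psi_\infty=\lim_t\psi_t$ is genuinely random, so it is a non-constant measurable function of the ergodic-component (exit) $\si$-algebra and the primary harmonic measure is non-trivial. You instead go through the entropy criterion: finite support of $\al$ gives $H(\mu)<\infty$, and your conditioning on the scales $(n_i)$, combined with the observation that $\ph_i(0)$ is an always-present uniform bit feeding the single position $z=1-i$, yields the explicit bound $H(\mu^{*t})\ge(t-N)\log 2$, whence $h(\mu)\ge\log 2>0$ and \thmref{thm:e} applies. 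The decoupling step you flag is indeed the one delicate point, and your treatment of it is sound: the bits entering $\psi_\infty(z)$ for distinct $z$ are disjoint, so conditional independence holds, and one uniform summand suffices for conditional uniformity. What each approach buys: yours is quantitative and self-contained, replacing the citation by a computed positive entropy rate; the paper's is softer but more robust --- precisely because it never touches entropy, the identical reasoning is reused verbatim for the reflected measure in \clmref{clm:reflected} and for the quotient walk in \clmref{clm:wtmu}, where $\al$ may have infinite support and $H(\check\mu)=H(\mu)=\infty$, a regime in which your route via \thmref{thm:e} is unavailable (which is, of course, the very theme of the paper).
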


\begin{proof}
Since the supports of all configurations $\ph_t$ in formula \eqref{eq:psi} are contained in the finite interval $[0\,..\,M]$ for some integer $M>0$, one can appy to the random walk $(\Lc,\mu)$ an argument stemming from the work of A. M. Vershik and the author \cite{Vershik-Kaimanovich79r, Kaimanovich83ar, Kaimanovich-Vershik83} (it connects transience of the projection of the walk onto the active group of the wreath product with the non-triviality of the Poisson boundary). Indeed, according to \eqref{eq:supp}, in this case
$$
\supp T^{-t} \ph_{t+1} \subset [-t\,..\,-t+M] \quad \forall\, t\ge 0 \;,
$$
and therefore the values $\psi_t(z)$ of the configurations $\psi_t$ \eqref{eq:psi} at any point $z\in\ZZ$ stabilize as $t\to\infty$. It is easy to verify that the resulting (infinite) configuration $\psi_\infty=\lim_t\psi_t$ is random (see \cite[Theorem 3.3]{Kaimanovich91}), and hence the primary harmonic measure of the random walk $(\Lc,\mu)$ is non-trivial.
\end{proof}

This reasoning does not hold in the case where the support $\supp\al$ is infinite. Below we will see that in this situation the measure $\mu$ \eqref{eq:mu} may also be Liouville, and in \lemref{lem:lem} we establish a simple necessary and sufficient condition for this. But before that, keeping in mind our goal of constructing counterexamples to \prpref{prp:1} and \prpref{prp:2} in the case of infinite entropy, let us take a look at two other closely related random walks.

\subsection{}

The first one is the random walk on the same group $\Lc$ determined by the reflection $\check\mu$ \eqref{eq:inv} of the measure $\mu$.

\begin{clm} \label{clm:reflected}
The reflected measure $\check\mu$ is non-Liouville for any measure $\al$ \eqref{eq:al} from the definition \eqref{eq:mu}.
\end{clm}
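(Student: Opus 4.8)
Recall that $\mu=\de_{-1}\otimes\la$, so the reflected measure is $\check\mu(n,\ph)=\mu(-n,T^{-n}\ph)$; since $\mu$ lives on $\ZZ$-component $-1$, the measure $\check\mu$ lives on $\ZZ$-component $+1$. Writing out a single increment of the reflected walk, an increment of type $(-1,\ph)$ inverts to $(1,T^{1}\ph)=(1,T\ph)$ (because $(n,\ph)^{-1}=(-n,T^{-n}\ph)$). Thus the reflected walk moves the lamplighter by $+1$ at each step, and the increments are configurations $T\ph$ with $\ph\sim\la$, i.e. singleton-free-support configurations sitting in $[1\,..\,n+1]$ for the $\ka_n$-part. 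The key structural point I would establish first is that for the \emph{reflected} walk the position of any given lamp also stabilizes, but for a different reason than in \clmref{clm:l1}: now the lamplighter marches off to $+\infty$, and a configuration contributed at time $t$ (centered near position $+t$) has support in an interval $[t\,..\,t+n+1]$ whose left endpoint tends to $+\infty$. Hence any fixed coordinate $z\in\ZZ$ is touched only finitely often, regardless of whether $\supp\al$ is finite or infinite, so the limit configuration $\check\psi_\infty=\lim_t\check\psi_t$ exists almost surely coordinate-wise.

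**Main steps.**
First I would write the $\check\mu$-walk increments explicitly and verify the claim $\supp(\text{increment at time }t)\subset[t\,..\,t+N_t]$ where $N_t$ is the (random) index drawn from $\al$; this is the reflected analogue of \eqref{eq:supp} and \eqref{eq:step}. The crucial difference from the forward walk is that here the \emph{left} endpoints of the successive support intervals strictly increase (they are at least $t$), whereas in the forward walk it was the \emph{right} endpoints that marched off to $-\infty$. Second, I would argue that because the left endpoint of the $t$-th increment's support is $\ge t$, each fixed $z\in\ZZ$ lies in $\supp(T^{t}\ph_{t+1})$ for only finitely many $t$ (indeed for none once $t>z$), so the coordinate-wise limit $\check\psi_\infty(z)=\sum_t (T^{t}\ph_{t+1})(z)\bmod 2$ is a finite sum and hence well-defined almost surely. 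Third, I would verify that this limit configuration is genuinely \emph{random}, i.e. not $\Pb$-almost surely constant: since at least one value of $\al$ is positive, there is a positive-probability event on which, say, the lamp at a chosen coordinate ends in state $1$ and another event on which it ends in state $0$, so $\check\psi_\infty$ is a non-degenerate boundary-measurable random variable. This produces a non-constant bounded $\check\mu$-harmonic function via the Poisson formula \eqref{eq:poisson}, showing the primary harmonic measure $\check\nu$ is non-trivial and hence $\check\mu$ is non-Liouville.

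**The obstacle.**
The one genuinely substantive point — everything else is bookkeeping with wreath-product coordinates — is the third step, proving that $\check\psi_\infty$ is \emph{non-constant} rather than merely well-defined. Unlike \clmref{clm:l1}, here I cannot simply cite transience of the $\ZZ$-projection (that gives existence of the limit, not its randomness), and I must rule out the degenerate possibility that $\check\psi_\infty$ is deterministic. The cleanest route is to exhibit two coordinates (or a single coordinate examined under two disjoint cylinder events in the increment sequence) whose final lamp states differ with positive probability; concretely, the first increment $T\ph_1$ already deposits lamps in $[1\,..\,N_1+1]$ according to $\ka_{N_1}$, and no \emph{later} increment can reach coordinate $1$ (since all later supports sit in $[2\,..\,\infty)$ after the lamplighter has advanced), so $\check\psi_\infty(1)$ is already determined by the \emph{first} step alone and equals $(T\ph_1)(1)=\ph_1(0)$, which is a nondegenerate $\ZZ_2$-valued random variable because $\ka_n$ charges configurations with $\ph(0)\in\{0,1\}$ both with positive probability for $n\ge 1$. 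This pins down a single boundary coordinate that is non-constant, completing the argument. I would double-check the edge case $N_1=0$ does not occur (the support of $\al$ lies in $\{n\ge 1\}$ by \eqref{eq:al}), which guarantees each $K_n$ is a genuine set of configurations reaching coordinate $n\ge 1$.
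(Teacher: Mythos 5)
Your proposal is correct and follows essentially the same route as the paper: compute $\check\mu=\de_1\otimes T\la$, observe that the increments only charge configurations supported on the positive ray so that, with the lamplighter marching to $+\infty$, the supports $T^t\ph_t\subset[t\,..\,t+N_t]$ leave every fixed coordinate behind and the limit configuration stabilizes for \emph{any} $\al$ --- exactly the paper's reduction to the argument of \clmref{clm:l1}. The only difference is cosmetic and in your favor: where the paper cites \cite[Theorem 3.3]{Kaimanovich91} for the randomness of $\check\psi_\infty$, you verify it directly via the fair-coin value $\check\psi_\infty(1)=\ph_1(0)$ (your occasional index slip $T^t\ph_{t+1}$ for $T^t\ph_t$ is harmless).
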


\begin{proof}
Since the inversion in the group $\Lc$ is given by
$$
(n,\ph)^{-1} = (-n, T^{-n}\ph ) \quad \forall\, (n,\ph)\in\Lc \;,
$$
one has
$$
\check\mu = \de_1 \otimes T\la \;,
$$
where the shift $T\la$ of the measure $\la$ \eqref{eq:la} only charges the configurations concentrated on the positive ray $[1\,..\,\infty)\subset\ZZ$. Therefore, the same reasoning as in the proof of \clmref{clm:l1} applies without any restrictions on the measure $\al$.
\end{proof}

\subsection{}

The state space of the second random walk is the group $\wt\Lc_0$ \eqref{eq:l0}. To define its step distribution, let us first take a sequence of probability measures $\wt\ka_n$ on the squares $K_n\times K_n$ of the sets $K_n$ \eqref{eq:kn} such that their marginal distributions (i.e., the images under the projections onto each of the copies of $K_n$) coincide with the corresponding measures $\ka_n$ \eqref{eq:ka}. Since the distributions $\ka_n$ are uniform, in other words, the measures $\wt\ka_n$ are exactly the result of normalizing bistochastic matrices parameterized by the sets $K_n$.

We are interested in the images
$$
\ol\ka_n=\om(\wt\ka_n)
$$
of the measures $\wt\ka_n$ under the action of the group operation in $\Phi$
\begin{equation} \label{eq:phh}
\om:(\ph,\ph')\mapsto \ph+\ph' \;, \qquad \ph,\ph'\in\Phi \;.
\end{equation}
It is clear that $\ol\ka_n=\de_\upth$ (we recall that the trivial ``empty'' configuration $\upth$ is the identity of the group $\Phi$) if and only if the measure $\wt\ka_n$ is concentrated on the diagonal of the product $K_n\times K_n$. Of course, there are other measures on $K_n\times K_n$ with the prescribed marginals~$\ka_n$; according to the Birkhoff -- von Neumann theorem, all of them are convex combinations of the extreme measures corresponding to permutations of the sets $K_n$. In particular, for $n=1$, the set $K_1$ consists of two configurations $\eps_1$ and $\eps_0+\eps_1$, and we can take for $\wt\ka_1$ the uniform distribution on the antidiagonal in $K_1\times K_1$
$$
\wt\ka_1 = \Unif \{ (\eps_1,\eps_0+\eps_1), (\eps_0+\eps_1,\eps_1) \} \;,
$$
that corresponds to the transposition of $\eps_1$ and $\eps_0+\eps_1$. With this choice, $\ol\ka_1=\om(\wt\ka_1)$ is the delta measure concentrated on the configuration $\eps=\eps_0$ \eqref{eq:ep}.

In general, it is obvious that
$$
\supp\ol\ka_n \subset K_n + K_n = \Phi_0^{n-1} \;,
$$
and it can be easily seen that for any prescribed measure $\rho$ on $\Phi_0^{n-1}$, we can choose a measure~$\wt\ka_n$ on the product $K_n\times K_n$ in such a way that its marginal distributions coincide with $\ka_n$, and the image $\ol\ka_n$ under the action of the mapping \eqref{eq:phh} is $\rho$. For instance, using the fact that the uniform distribution $m=\Unif(\Phi_0^{n-1})$ on $\Phi_0^{n-1}$ is preserved under convolution with any measure on this subgroup, we can take the image of the product measure $m\otimes\rho$ under the mapping
$$
(\ph_0,\ol\ph) \mapsto (\ph_0+\eps_n,\ph_0+\ol\ph+\eps_n) \;.
$$

Now, let us take the weighted average
$$
\wt\la = \sum_{n=1}^\infty \al_n \wt\ka_n
$$
with the same weights $\al_n$ as in the definition \eqref{eq:la} of the measure $\la$, and finally set
\begin{equation} \label{eq:wtmu}
\wt\mu = \de_{-1} \otimes \wt\la \;.
\end{equation}

\begin{clm} \label{clm:wtmu}
If the support of the distribution $\al$ \eqref{eq:al} is infinite, and the set of indices $n\in\supp\al$ for which $\wt\ka_n\neq \ka_n\otimes\ka_n$ is non-empty and finite, then the measure $\wt\mu$ is non-Liouville.
\end{clm}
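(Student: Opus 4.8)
The plan is to reduce the statement to the already-proved single-copy \clmref{clm:l1} by pushing $\wt\mu$ forward under the difference homomorphism $\ol\pi$ \eqref{eq:pid}. The two coordinate projections $\pi,\pi'$ send $\wt\mu$ to the marginal $\mu$, which (in the regime of interest) is Liouville and so yields no obstruction; the non-Liouville behaviour must therefore be sought in the \emph{joint} behaviour of the two configurations, and this is exactly what $\ol\pi$ records. Since $\ol\pi:\wt\Lc_0\to\Lc$ is surjective and carries $(\wt\Lc_0,\wt\mu)$ to the walk $(\Lc,\ol\mu)$ with $\ol\mu=\ol\pi(\wt\mu)=\de_{-1}\otimes\ol\la$ and $\ol\la=\om(\wt\la)=\sum_n\al_n\ol\ka_n$, the quotient principle recorded at the end of \secref{sec:pois} reduces everything to proving that $\ol\mu$ is non-Liouville: non-Liouvillity of the quotient then lifts to $\wt\mu$.

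First I would extract the structural consequence of the hypothesis: it is tailored precisely so that $\ol\la$ is \emph{finitely supported}. The difference measures $\ol\ka_n$ attached to the non-exceptional indices reduce to $\de_\upth$, so only the finitely many exceptional ones survive, and each of those is supported in $\Phi_0^{n-1}$; hence $\supp\ol\la\subset\Phi_0^{N-1}$ where $N$ is the largest exceptional index, and this support is not a singleton because at least one $\ol\ka_n\neq\de_\upth$. Thus $(\Lc,\ol\mu)$ is a walk whose lamp-increments all sit inside the fixed block $[0\,..\,N-1]$, while its projection onto the active group $\ZZ$ is the deterministic — hence transient — drift $t\mapsto -t$.

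This puts $(\Lc,\ol\mu)$ into exactly the situation of \clmref{clm:l1}. By \eqref{eq:supp} the $i$-th deposited configuration $T^{-(i-1)}\chi_i$ is supported in $[1-i\,..\,N-i]$, so every fixed lamp $z\in\ZZ$ is switched at only finitely many times, and the configurations $\ol\psi_t$ converge pointwise to an infinite limit $\ol\psi_\infty$. The main obstacle is the same one already faced in \clmref{clm:l1}: one must check that $\ol\psi_\infty$ is genuinely random rather than almost surely constant, which is where transience of the $\ZZ$-projection enters, via \cite[Theorem~3.3]{Kaimanovich91}; the finiteness and non-triviality of $\supp\ol\la$ established above are exactly what is needed to apply that result. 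Granting this, $\p_{\ol\mu}\Lc$ is nontrivial, so $\ol\mu$ is non-Liouville, and \clmref{clm:wtmu} follows.
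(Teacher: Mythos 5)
Your proposal is correct and follows the paper's own proof essentially verbatim: push $\wt\mu$ forward under the difference homomorphism $\ol\pi$ to the quotient walk $(\Lc,\ol\mu)$ with $\ol\mu=\de_{-1}\otimes\ol\la$, observe that the hypothesis makes $\supp\ol\la$ finite and not reduced to $\{\upth\}$, and then run the stabilization-plus-randomness argument of \clmref{clm:l1} (via \cite[Theorem~3.3]{Kaimanovich91}), concluding by the quotient principle from \secref{sec:pois}. One caveat you share with the paper: the step ``$\ol\ka_n=\de_\upth$ for non-exceptional $n$'' requires reading the hypothesis as $\wt\ka_n=\diag\ka_n$ (concentration on the diagonal) rather than the literal $\wt\ka_n=\ka_n\otimes\ka_n$, since $\om(\ka_n\otimes\ka_n)=\ka_n*\ka_n=\Unif(\Phi_0^{n-1})\neq\de_\upth$ --- this is a misprint in the statement (the paper's earlier discussion confirms the diagonal is what characterizes $\ol\ka_n=\de_\upth$), not a flaw in your argument.
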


\begin{proof}
By definition, the image of the random walk $(\wt\Lc_0,\wt\mu)$ under the homomorphism \eqref{eq:pid} is the random walk on the group $\Lc$ with the step distribution
$$
\ol\mu = \de_{-1} \otimes \ol\la \;,
$$
where
$$
\ol\la = \sum_{n=1}^\infty \al_n \ol\ka_n \;.
$$
Thus, to prove that the measure $\wt\mu$ is not Liouville, it suffices to establish that the quotient measure $\ol\mu$ is not Liouville. By assumption, the support of the measure $\ol\la$ is finite and contains at least one configuration other than the empty configuration $\upth$, since $\supp\ol\ka_n=\{\upth\}$ if and only if $\wt\ka_n= \ka_n\otimes\ka_n$. Therefore, the measure $\ol\mu$ is non-Liouville for the same reasons as in the proof of \clmref{clm:l1}.
\end{proof}

\section{Infinite entropy} \label{sec:inf}

\begin{flushright}
\begin{tabular}{l}
\dots so ist das Endliche nur als das,
\\
wor\"uber hinausgegangen werden mu\ss,
\\
die Negation seiner an ihm selbst,
\\
welche die Unendlichkeit ist.
\\
\hfill\textit{G.~W.~F.~Hegel. Wissenschaft der Logik}
\end{tabular}
\end{flushright}

\subsection{}

The key technical result is the following.

\begin{lem} \label{lem:lem}
The measure $\mu$ \eqref{eq:mu} on the group $\Lc$ \eqref{eq:lc} is Liouville if and only if the entropy $H(\mu)$ is infinite.
\end{lem}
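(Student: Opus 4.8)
The plan is to turn the entropy hypothesis into a transparent moment condition on the weight sequence $\al$, and then to settle each implication by a direct analysis of the lamp dynamics, since the entropy criterion \thmref{thm:e} is unavailable in precisely the regime of interest. First I would compute $H(\mu)=H(\la)$ (the $\ZZ$-factor of $\mu=\de_{-1}\otimes\la$ is deterministic) and, using $\la(\ph)=\al_n 2^{-n}$ for $\ph\in K_n$ together with $|K_n|=2^n$, obtain $H(\mu)=H(\al)+(\log 2)\sum_n n\al_n$. Since a distribution on the positive integers with finite mean has finite entropy (compare it against a geometric law with the same mean via the Gibbs inequality), this yields the clean equivalence $H(\mu)=\infty \iff \sum_n n\al_n=\infty$, and the Lemma becomes the assertion that $\mu$ is Liouville if and only if $\sum_n n\al_n=\infty$.

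The second ingredient is the pointwise behaviour of the configurations $\psi_t$ from \eqref{eq:psi}. From $\psi_{t+1}(z)=\psi_t(z)+\ph_{t+1}(z+t)$ one sees that, once the lamplighter has passed a site $z$, that site is toggled at step $t+1$ exactly when $\ph_{t+1}(z+t)=1$. These events involve distinct increments, hence are independent, and a short computation shows that the corresponding series of toggle probabilities converges if and only if $\sum_n n\al_n<\infty$. By the two Borel--Cantelli lemmas, every site is therefore flipped only finitely often almost surely when $\sum_n n\al_n<\infty$, and infinitely often almost surely when $\sum_n n\al_n=\infty$. In the first case ($H(\mu)<\infty$) almost sure finiteness of the flips produces a well-defined random limit configuration $\psi_\infty=\lim_t\psi_t$, whose non-degeneracy exhibits a non-trivial boundary invariant, exactly as in the proof of \clmref{clm:l1} (of which the finite-support case is the instance where stabilisation is deterministic); thus $\mu$ is non-Liouville.

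The Liouville direction ($\sum_n n\al_n=\infty$) is the crux, and here I would argue directly, bypassing the inapplicable entropy criterion. Let $f$ be a bounded $\mu$-harmonic function on $\sgr\mu$; I claim it is constant. The key step is that two walks started at the same $\ZZ$-level from configurations differing on a finite set $F$ can be coupled to merge exactly. Keep their increments equal — which freezes the configuration discrepancy — except at steps reserved for erasing $F$ one site at a time: to kill the discrepancy at a site $z$, wait for a step $s$ whose fresh increment $\ph_s\in K_{n_s}$ strictly overshoots $z$, i.e.\ $n_s>m$, where $m$ is the coordinate of $z$ relative to the current lamplighter position, and replace $\ph_s$ in the second walk by $\ph_s+\eps_m$. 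This single-bit flip toggles the configuration only at $z$, keeps membership in $K_{n_s}$, and is a measure-preserving involution of $K_{n_s}$, so the altered increments remain i.i.d.\ $\la$. The infinitude of overshoot opportunities for each site is exactly what $\sum_n n\al_n=\infty$ guarantees, through the independent second Borel--Cantelli lemma from the previous step; erasing the finitely many sites of $F$ in turn, the walks coincide from some almost surely finite time on (they share the deterministic $\ZZ$-coordinate throughout). Since $f(g_t)$ is a bounded martingale, its almost sure limit is a tail function, equal along the two merged paths, so $f$ takes equal values at the two starting points. Hence $f$ is constant on each $\ZZ$-fibre of $\sgr\mu$, and harmonicity $f(x)=\sum_\ph\la(\ph)\,f\big(x\,(-1,\ph)\big)$ then equates the fibre-constants of consecutive levels; therefore $f$ is constant and $\mu$ is Liouville.

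The main obstacle is precisely this last coupling: one must erase the finite discrepancy without collateral damage to the already-corrected sites and while keeping each modified increment sequence genuinely i.i.d.\ $\la$. The device that makes both possible is the single-coordinate involution on the uniform set $K_{n_s}$, which is available exactly when the fresh increment strictly overshoots the target site — and this is why the construction (and hence the Liouville property) succeeds precisely when $\sum_n n\al_n$, equivalently $H(\mu)$, is infinite.
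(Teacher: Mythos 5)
Your proof is correct, and it genuinely deviates from the paper's in both directions. For the finite-entropy half the paper does not argue directly at all: it combines \clmref{clm:reflected} with \prpref{prp:1}, i.e., it uses the entropy identity $h(\mu)=h(\check\mu)$ and the criterion of \thmref{thm:e}, exploiting the one-sidedness of the construction. You instead prove pointwise stabilization of the configurations by summing the toggle probabilities $\al_{z+t}+\tfrac12\sum_{n>z+t}\al_n$ (whose total is comparable to $\sum_n n\al_n$) and applying Borel--Cantelli; this is more elementary and self-contained --- no entropy theory is needed --- and in effect it extends \clmref{clm:l1} from finitely supported $\al$ to $\al$ with finite first moment, at the price of still needing the non-degeneracy of the limit configuration $\psi_\infty$, for which the paper (in \clmref{clm:l1}) defers to \cite[Theorem 3.3]{Kaimanovich91}, as do you implicitly. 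For the Liouville half, your central device --- modifying an increment at a stopping time at which the fresh increment overshoots the target site, legitimized by the fact that the conditional law there is a mixture of the uniform measures $\ka_n$ with $n$ large and is therefore invariant under the relevant flip --- is exactly the paper's mechanism (its stopping time \eqref{eq:tau} and replacement \eqref{eq:prime}); the difference is packaging. The paper performs a single replacement by the whole shifted configuration $T^{\tau-1}\ph\in\Phi_0^{|\ph|+t_0}$ and concludes via asymptotic invariance \eqref{eq:conv} of the one-dimensional distributions $\la_t$ through the criterion \eqref{eq:im}, whereas you erase the discrepancy bit by bit with the involutions $\ph\mapsto\ph+\eps_m$ of $K_{n_s}$ and conclude by a successful-coupling/bounded-martingale argument, propagating constancy across $\ZZ$-levels by harmonicity; the two conclusions are equivalent formulations of the Liouville property, and the paper's one-shot replacement is slightly slicker while your coupling acts directly on harmonic functions. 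Two small points should be made explicit in your write-up: the relative coordinate must satisfy $0\le m<n_s$, so one must also wait until the lamplighter has passed the target site (the analogue of the condition $t>|\ph|$ in \eqref{eq:tau}; since $m$ increases by one at each step this costs only finitely many steps), and the modified increment sequence retains the law of an i.i.d.\ $\la$-sequence because the correction times and flipped positions are functions of the ranges $(|\ph_t|)$ alone, which your flips do not change --- your observation that each flip preserves membership in $K_{n_s}$ is precisely what makes the iterated construction legitimate.
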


\begin{proof}
Since the reflected measure $\check\mu$ is always non-Liouville (\clmref{clm:reflected}), if the entropy $H(\mu)$ is finite, then by \prpref{prp:1} the measure $\mu$ itself is also non-Liouville.

Now, let us consider the situation when the entropy $H(\mu)$ is infinite and show that in this case the measure $\mu$ is Liouville. First of all, note that since $|K_n|=2^n$, the entropy of the measure $\mu$ is
$$
H(\mu) = H(\al) + (\log 2)|\al| \;,
$$
where
$$
|\al| = \sum_n n \al_n
$$
denotes the first moment of the measure $\al$ \eqref{eq:al}. On the other hand, $H(\al)\le (\log 2)|\al|$ due to the Gibbs inequality applied to the measures $\al$ and $\xi=(\xi_n)_{n\ge 1}$, where $\xi_n=2^{-n}$. Thus, the entropy $H(\mu)$ is infinite if and only if the first moment $|\al|$ of the measure $\al$ is infinite.

An immediate consequence of this fact is the following observation. Let us denote by
$$
|\ph| = \max \{ |n|: n\in\supp\ph \}
$$
the ``range'' of the configuration $\ph\in\Phi$, also putting $|\upth|=0$ for the empty configuration $\upth$. In particular, $|\ph|=n$ for all configurations $\ph$ from any set $K_n$ \eqref{eq:kn}, and therefore the first moment $|\al|$ is exactly the expectation of the ranges $|\ph_n|$ of the increments of the random walk \eqref{eq:paths}. If it is infinite, then a routine application of the Borel -- Cantelli lemma implies that almost surely
\begin{equation} \label{eq:limsup}
\limsup_{t\to\infty} \bigl( |\ph_t|-t \bigr) = \infty \;.
\end{equation}

By the definition of the measure $\mu$, its convolutions (i.e., the one-dimensional distributions of the associated random walk) have the form
$$
\mu^{*t} = \de_{-t} \otimes \la_t \;,
$$
where $\la_t$ are the probability measures on the group $\Phi$ representing the distributions of the component $\psi_t$ \eqref{eq:psi} of the random walk. In other words, they are the convolutions
$$
\la_t = \la * T^{-1}\la * \dots * T^{-t+1}\la
$$
of the shifts of the measure $\la$ \eqref{eq:la} on the commutative group $\Phi$, and in particular
$$
\la_{t+1} = \la * T^{-1}\la_t \qquad\forall\,t\ge 1 \;.
$$
Therefore, as follows from the characterization of the Liouville property in terms of the asymptotic invariance of Ces\`aro means \eqref{eq:im}, for proving the Liouville property of the measure~$\mu$ it is sufficient to verify the asymptotic invariance property of the sequence of measures $(\la_t)$ with respect to the action of the group $\Phi$, namely,
\begin{equation} \label{eq:conv}
\| \ph \la_t - \la_t \| \xrightarrow[t\to\infty]{} 0 \qquad\forall\,\ph\in\Phi \;.
\end{equation}

The idea behind proving the convergence \eqref{eq:conv} is to replace one of the increments $\ph_\tau$ of each sample path $\gb$ \eqref{eq:paths} with
$$
\ph'_\tau=\ph_\tau + T^{\tau-1} \ph \;,
$$
leaving the other increments unchanged. As follows from formula \eqref{eq:psi}, for the resulting new sample paths
\begin{equation} \label{eq:prime}
\psi'_t = \psi_t + T^{-\tau+1} (\ph'_\tau - \ph_\tau) = \psi_t + \ph \qquad \forall\,t\ge\tau \;.
\end{equation}
If this replacement preserves the measure on the space of sample paths, or equivalently, on the space of sequences of independent increments $(\ph_t)$ with a common distribution $\la$ \eqref{eq:la}, then the corresponding one-dimensional distributions $\la_t$ will have the desired property of asymptotic invariance \eqref{eq:conv}.

To implement this idea, let us fix a non-empty configuration $\ph\in\Phi$, and, by using the fact that the limit \eqref{eq:limsup} is infinite, for almost every sample path $\gb$ put
\begin{equation} \label{eq:tau}
\tau = \tau(\gb) = \min \{ t> |\ph|: |\ph_t|-t> |\ph|  \} \;.
\end{equation}
Now let us define the transformation
$$
U=U_\ph:(\ph_t)\mapsto(\ph'_t)
$$
on the space of sequences of independent $\la$-distributed increments $(\ph_t)$ by putting
$$
\ph'_t =
\begin{cases}
\ph_t+ T^{\tau-1}\ph  \;, & t=\tau \;, \\
\ph_t \;, & t\neq\tau \;.
\end{cases}
$$
The conditional distribution of $\ph_\tau$, given the value $\tau=t_0$, is a convex combination of the measures $\ka_n$ \eqref{eq:ka} with the weights obtained by normalizing the restriction of the distribution~$\al$~\eqref{eq:al} to the ray $(|\ph|+t_0\,..\,\infty)$, and therefore it is invariant under the action of the subgroup $\Phi_0^{|\ph|+t_0}$. On the other hand, the support of the configuration $\ph$ is contained in the interval $[-|\ph|\,..\,|\ph|]$ by the definition of the range $|\ph|$, and therefore, due to the inequalities from definition \eqref{eq:tau}, the support of the shift $T^{t_0-1}\ph$ is contained in the interval $[0\,..\,|\ph|+\tau_0]$, implying that $T^{t_0-1}\ph\in \Phi_0^{|\ph|+t_0}$.

As follows from formula \eqref{eq:prime}, for any $t_0>0$
$$
\| \ph\la_t - \la_t \| \le 2 \Pb\{\tau > t_0 \} \qquad\forall\,t\ge t_0 \;,
$$
which implies the convergence \eqref{eq:conv} and completes the proof.
\end{proof}

\subsection{}

\lemref{lem:lem} makes it easy to proceed with the presentation of our main examples.

\begin{thm} \label{thm:2}
If the measure $\mu$ \eqref{eq:mu} on the group $\Lc$ \eqref{eq:lc} has infinite entropy $H(\mu)$, then the reflected measure $\check\mu$ is non-Liouville, while the measure $\mu$ itself is Liouville.
\end{thm}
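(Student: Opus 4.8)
The plan is to assemble this theorem directly from the two results already established in this section, since the statement splits into two independent halves, each of which is the exact conclusion of a prior result. The only thing to check is that the hypotheses of those results are met under the standing assumption $H(\mu)=\infty$.

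First, for the non-Liouville property of the reflected measure $\check\mu$, I would simply invoke \clmref{clm:reflected}. That claim asserts that $\check\mu$ is non-Liouville for \emph{every} choice of the distribution $\al$ in \eqref{eq:al}, with no restriction whatsoever; its proof reduces $\check\mu=\de_1\otimes T\la$ to the configurations supported on the positive ray and applies the transience argument of \clmref{clm:l1}. In particular this holds whenever $\al$ is chosen so that the entropy is infinite, so no further work is needed here. Second, for the Liouville property of $\mu$ itself, this is precisely the nontrivial implication in \lemref{lem:lem}: the equivalence stated there gives that $\mu$ is Liouville if and only if $H(\mu)=\infty$, and under our hypothesis the conclusion follows at once.

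Consequently the theorem is a one-line corollary of \clmref{clm:reflected} together with \lemref{lem:lem}, and I expect no genuine obstacle at this stage. Whatever difficulty there was has already been absorbed into the proof of \lemref{lem:lem} — specifically, the step showing that infinite entropy (equivalently, infinite first moment $|\al|$) forces the limit \eqref{eq:limsup} to be infinite and hence drives the asymptotic invariance \eqref{eq:conv} via the Borel–Cantelli estimate and the increment-resampling transformation $U_\ph$. With that lemma and \clmref{clm:reflected} in hand, the present statement merely records the striking consequence: a single measure $\mu$ that is Liouville while its reflection $\check\mu$ is not, which is exactly the phenomenon forbidden in the finite-entropy regime by \prpref{prp:1}.
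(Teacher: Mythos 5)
Your proposal is correct and coincides with the paper's own proof, which likewise derives \thmref{thm:2} as an immediate combination of \clmref{clm:reflected} (giving the non-Liouville property of $\check\mu$ without any restriction on $\al$) and the nontrivial direction of \lemref{lem:lem}. Your additional remarks about where the real work lies and the contrast with \prpref{prp:1} match the paper's framing exactly.
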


\begin{proof}
This is a combination of \clmref{clm:reflected} and \lemref{lem:lem}.
\end{proof}

\begin{rem} \label{rem:im2}
In light of \remref{rem:inv}, \thmref{thm:2} implies that if the entropy of the measure~$\mu$ is infinite, then all means on the group $\gr\mu$, obtained as Banach limits of the sequence of convolutions $\mu^{*t}$, are left-invariant but not right-invariant. Further elaborating the question from \remref{rem:lr}, it would be interesting to understand the structure of the set of the strictly one-sided means that arise in this way from convolutions of an irreducible measure $\mu$ on an arbitrary amenable group $G$ (let us denote this set $\Sc(G)$).

The question of describing the class of groups on which all invariant means (not just those arising from convolutions) are two-sided was first posed by G. M. Adelson-Velsky and Yu.~A.~Shreider \cite[Theorem 7]{Adelson-Shreider57r} (note in parentheses that they were not familiar with any of the works on invariant means that appeared after the publication of the Banach -- Tarski paradox, including von Neumann's 1929 paper), who, however, erroneously claimed that all groups of locally subexponential growth belong to this class. This error was discovered by A.~Paterson \cite{Paterson79}, who proved that in fact this class is much smaller and consists (if we restrict ourselves to discrete groups only) of FC-central groups, i.e., those in which all conjugacy classes are finite (see also further works by P. Milnes \cite{Milnes81}, J. Rosenblatt~-- M.~Talagrand~\cite{Rosenblatt-Talagrand81}, and J. Hopfensperger \cite{Hopfensperger20}). Thus, for all groups $G$ that are hyper-FC-central (see the discussion in \secref{sec:1}), but not FC-central, the set $\Sc(G)$ is empty. On the other hand, a recent result of A. V. Alpeev \cite{Alpeev21p} implies that $\Sc(G)$ is non-empty for all other amenable groups $G$.
\end{rem}

\begin{thm} \label{thm:3}
If the measure $\wt\mu$ \eqref{eq:wtmu} on the group $\wt\Lc_0 \subset \Lc\times\Lc$ \eqref{eq:l0} has the property that the measure $\al$ \eqref{eq:al} has infinite first moment, and the set of indices $n\in\supp\al$ with $\wt\ka_n\neq \ka_n\otimes\ka_n$ is non-empty and finite, then the measure $\wt\mu$ is non-Liouville, while both coordinate projections of the measure $\wt\mu$ onto the multipliers of the product $\Lc\times\Lc$ are Liouville.
\end{thm}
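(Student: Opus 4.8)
The plan is to deduce all three assertions from results already established in the excerpt, so that \thmref{thm:3} becomes essentially a bookkeeping statement about the two coordinate projections together with the difference homomorphism. The non-Liouville assertion for $\wt\mu$ is immediate: it is exactly \clmref{clm:wtmu}, whose hypotheses are precisely the ones assumed here (infinite first moment of $\al$ forces $\supp\al$ infinite, and the set of bad indices is non-empty and finite). So the only real work is to show that \emph{both} coordinate projections $\pi(\wt\mu)$ and $\pi'(\wt\mu)$ are Liouville.

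First I would compute the two coordinate projections explicitly. By the definition \eqref{eq:wtmu} of $\wt\mu=\de_{-1}\otimes\wt\la$ with $\wt\la=\sum_n\al_n\wt\ka_n$, and since each $\wt\ka_n$ was constructed to have both marginals equal to $\ka_n$, applying the homomorphism $\pi$ from \eqref{eq:pi} gives $\pi(\wt\mu)=\de_{-1}\otimes\sum_n\al_n\ka_n=\de_{-1}\otimes\la=\mu$, the measure \eqref{eq:mu} from the construction; and likewise $\pi'(\wt\mu)=\mu$. Thus both coordinate projections coincide with the single measure $\mu=\mu_\al$ on $\Lc$. This is the one computation that must be done carefully, but it is routine: it only uses that the marginals of each $\wt\ka_n$ are $\ka_n$ and that $\pi,\pi'$ are the coordinate homomorphisms.

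Now the infinite-first-moment hypothesis is exactly the condition under which \lemref{lem:lem} was proved to give the Liouville property. Indeed, in the proof of \lemref{lem:lem} it was shown that $H(\mu)=H(\al)+(\log 2)|\al|$ and that $H(\mu)$ is infinite if and only if the first moment $|\al|$ is infinite; and \lemref{lem:lem} asserts that $\mu$ is Liouville precisely when $H(\mu)=\infty$. Hence infinite first moment of $\al$ makes $\mu$ Liouville, and since both coordinate projections equal $\mu$, both are Liouville. Combined with \clmref{clm:wtmu} for the non-Liouville conclusion, this completes the proof.

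The main thing to watch is not any analytic obstacle but the matching of hypotheses across the three invoked results. \clmref{clm:wtmu} is stated in terms of $\supp\al$ being infinite, whereas \thmref{thm:3} assumes $|\al|=\infty$; I would note explicitly that infinite first moment implies infinite support, so \clmref{clm:wtmu} applies. Conversely, the Liouville half needs the sharper quantitative hypothesis $|\al|=\infty$ (mere infiniteness of $\supp\al$ would not suffice, as \clmref{clm:l1} and the discussion after it make clear), which is supplied by the first-moment assumption and feeds directly into \lemref{lem:lem}. No step requires genuinely new ideas; the content of the theorem is that the construction of \secref{sec:4} realizes a measure on $\wt\Lc_0$ whose marginals are both the Liouville measure $\mu$ while its difference image $\ol\mu$ is non-Liouville.
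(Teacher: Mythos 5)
Your proposal is correct and follows exactly the paper's own route: the paper proves \thmref{thm:3} as ``a combination of \clmref{clm:wtmu} and \lemref{lem:lem},'' and you have simply made the implicit bookkeeping explicit (that $\pi(\wt\mu)=\pi'(\wt\mu)=\mu$ since each $\wt\ka_n$ has marginals $\ka_n$, that infinite first moment implies infinite support so \clmref{clm:wtmu} applies, and that $|\al|=\infty$ is equivalent to $H(\mu)=\infty$ as shown within the proof of \lemref{lem:lem}). All hypothesis-matching checks are accurate, so there is nothing to add.
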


\begin{proof}
This is a combination of \clmref{clm:wtmu} and \lemref{lem:lem}.
\end{proof}

\section{Appendix. ``Liouville's theorem'' and ``Shannon's theorem''}

\begin{flushright}
\begin{tabular}{l}
\textcyr{Нет уж извините, не допущу}
\\
\textcyr{пройти позади такому приятному,}
\\
\textcyr{образованному гостю.}
\\
\hfill\textit{\textcyr{Н.~Гоголь. Мертвые души}}
\end{tabular}
\end{flushright}

\subsection{}

The events related to the emergence of what is now called Liouville's theorem on the absence of non-constant bounded analytic (as well as harmonic) functions on the complex plane are very well documented in the reports \emph{(Comptes Rendus)} of the meetings of \emph{l'Acad\'emie des sciences} for the years 1844 and 1851, and even in a brief account they are quite interesting (we omit numerous even more picturesque details, which we hope to return to on another occasion).

At the meeting on December 9, 1844, Liouville, in his remarks on a memoir on elliptic functions presented that day by Chasles, announces, among other things, his result on the absence of non-constant bounded doubly periodic analytic functions on the complex plane (referred to by him as a ``principle,'' because, according to Liouville, it \emph{``para\^it imprimer \`a l'\'etude de ces fonctions un caract\`ere d'unit\'e et de simplicit\'e tout particulier''}). Already at the next meeting on December 16 (exactly a week later), Cauchy asserts that Liouville's principle is a special case of his own results, published as early as in 1827, and presents a complete (by the standards of that time) proof without any assumptions of periodicity (this is what we currently know as ``Liouville's theorem''). A week later (December 23), Cauchy presents another proof, and altogether, according to U. Bottazzini's calculations \cite[p.~165]{Bottazzini86b}, Cauchy publishes five different proofs in less than a year.

The second act takes place on March 31, 1851. As the chairman of a special \emph{ad hoc} committee, Cauchy presents to the Academy a report on a memoir by Hermite on doubly periodic functions. At the very end of this report, Liouville's announcement is mentioned (the proof still has not been published). Liouville immediately informs that a complete proof is contained in the notes of lectures he himself gave in 1847 to two German mathematicians and recorded by one of them, Borchardt \emph{(or j'ai chez moi, et je pourrai déposer sur le bureau, avant la fin de la séance, une pièce manuscrite qui paraîtra concluante à cet égard)}, and indeed manages to present this manuscript before the end of the session (it was published by Borchardt much later, only in 1879). The table of contents of these lecture notes reproduced in \emph{Comptes Rendus} begins with the theorem on the absence of non-constant bounded doubly periodic functions. But the last word on this day belongs to Cauchy, who once again asserts his priority, referring not only to the note of 1844, but also to his earlier works.

The presentation of Liouville's theorem in \emph{``A Course of Modern Analysis''} by Whittaker and Watson is accompanied with a note (starting from the second edition of 1915): ``This theorem, which is really due to Cauchy, was given this name by Borchardt, who heard it in Liouville's lectures in 1847.'' This story has been discussed many times by historians of mathematics (in addition to the aforementioned book \cite{Bottazzini86b}, see also detailed analyses by J.~L\"utzen \cite[\S\S~XIII.9--17,~21]{Lutzen90b} and U. Bottazzini -- J. Gray \cite[\S\S~3.5.6,~4.2.4]{Bottazzini-Gray13b}), who ultimately tend to give priority to Liouville (\cite[pp.~543--544]{Lutzen90b} and \cite[pp.~231--232]{Bottazzini-Gray13b}, respectively), referring, among other things, to the note found in his records in the summer of 1844, mentioning how the general case can be obtained from the doubly periodic case. And yet, as for the general case, Cauchy's reasoning appears much more conceptual from a modern point of view (since in modern terms the doubly periodic case is ``just'' an exercise in applying the maximum principle \ldots).

Ever since Bernhard Riemann ``virtually put equality signs between two-dimensional potential theory and complex function theory,'' to quote L. Ahlfors \cite{Ahlfors53}, the statement about the absence of non-constant bounded harmonic functions on the plane has also been called Liouville's theorem. We will continue the story of the Liouville property for harmonic functions elsewhere.

\subsection{}

The definition of the entropy
\begin{equation} \label{eq:es}
H(p) = - \sum p_i \log p_i
\end{equation}
of a discrete probability distribution $p=(p_i)$ is usually associated with the name of Claude Shannon. However, in his seminal paper ``A Mathematical Theory of Communication,'' he explicitly writes that ``entropy appears in the same form in certain formulations of statistical mechanics'' \cite[\S~6]{Shannon48}. There is a widely circulated, albeit unconfirmed by Shannon himself, anecdote about his conversation with John von Neumann, who allegedly suggested using the term ``entropy,'' saying: ``You should call it entropy, for two reasons. In the first place your uncertainty function has been used in statistical mechanics under that name. In the second place, and more importantly, no one knows what entropy really is, so in a debate you will always have the advantage'' (for instance, see O. Rioul \cite[\S~12]{Rioul21}).

When discussing the genesis of the entropy \eqref{eq:es} from statistical mechanics, the first names that come to mind are Ludwig Boltzmann and Josiah Willard Gibbs (for the entropy in classical and quantum mechanics, see the very informative survey by S. Goldstein, J. Lebowitz, R. Tumulka, and N. Zanghi \cite{Goldstein-Lebowitz-Tumulka-Zanghi20}). Boltzmann provided a probabilistic interpretation of Clausius' thermodynamic entropy as the negative logarithm of the ``probability'' that the system is in a given state. The use of quotation marks is explained by the fact that in reality this quantity (the measure of permutability or \emph{Permutabilit{\" a}tsma\ss} according to Boltzmann \cite[p.~192]{Boltzmann77}; in modern terms, this is the differential entropy of a distribution in the phase space) arises as a result of some limiting procedure based on combinatorial calculations with infinitesimal regions, and therefore it is defined (even without demanding excessive rigor from the limiting process) up to an additive constant. Boltzmann himself repeatedly emphasized that the discretization he used ``is nothing more than an artificial device \emph{(Hilfsmittel)} that helps to calculate physical processes,'' explicitly calling it a ``mathematical fiction'' \cite[p.~348]{Boltzmann72}.

Although the sums of the form \eqref{eq:es} often appear in the works of Boltzmann and (slightly later) Gibbs, their appearance always serves as an auxiliary step towards the differential entropy (``all the infinite in nature means nothing other than some limiting procedure,'' as Boltzmann says \cite[p.~167]{Boltzmann77}), and the normalization condition for the weights $\sum p_i=1$ is never imposed. For example, the proof of Theorem VIII in Gibbs's "Elementary Principles in Statistical Mechanics" \cite{Gibbs02} essentially boils down to establishing what is now called Gibbs's inequality (the non-negativity of the Kullback -- Leibler divergence in the discrete case), but Gibbs does not feel any need to discuss it as a property of probability distributions.

Max Planck poses and solves the problem of the physical meaning of Boltzmann's limiting procedure, which leads him to the understanding that quantization is a physical reality. He vividly recalls in his Nobel lecture of 1920 \cite{Planck75r} how, following the discovery of the law of black-body radiation, the necessity arose to give it a conceptual interpretation: ``But even if this radiation formula should prove to be absolutely accurate it would after all be only an interpolation formula found by happy guesswork, and would thus leave one rather unsatisfied. I was, therefore, from the day of its origination, occupied with the task of giving it a real physical meaning, and this question led me, along Boltzmann's line of thought, to the consideration of the relation between entropy and probability; until after some weeks of the most intense work of my life clearness began to dawn upon me, and an unexpected view revealed itself in the distance.''

The perspective that opened up was nothing other than the first glimpse of quantum theory. Later in the same lecture, Planck explains in detail why the new theory implies that physical entropy (not just its increments) has an ``absolute value,'' i.e., in modern terms, it is a function and not an additive cocycle. In an expanded form this explanation is contained in the beginning (\S\S~113--131) of the section ``Entropy and Probability'' of his book ``The Theory of Heat Radiation'' (starting from the second edition of 1913 \cite{Planck13b}). As he writes in the preface, ``the entropy of a state has a quite definite, positive value, which, as a minimum, becomes zero, while in contrast therewith the entropy may, according to the classical thermodynamics, decrease without limit to minus infinity. For the present, I would consider this proposition as the very quintessence of the hypothesis of quanta.'' Formula (173) in \S~124 of Planck's book
$$
S = - kN \sum w_n\log w_n \;,
$$
where $S$ is the physical entropy, $k$ is the Boltzmann constant introduced by Planck, and $N$ is the number of molecules, is apparently the first appearance of the ``mathematical'' entropy \eqref{eq:es} with probability weights $w_n$.

Returning to von Neumann, it is precisely this formula (and, of course, with an explicit use of the fact that the weights in it are normalized; in the quantum setup this means that the trace of the corresponding state is one) that he quotes when defining the quantum ``von Neumann entropy'' \cite{vonNeumann27}.

In conclusion, it should be emphasized that all the above applies only to the entropy of a single discrete probability distribution. Shannon introduced into consideration sequences of distributions the entropies of which are subadditive, making it possible to talk of the arising \textsf{asymptotic entropy}, i.e., the rate of linear growth of these entropies. The additional structure used in this case is a stationary sequence of symbols, and the resulting sequence of measures is its finite dimensional distributions. Another example is the sequence of convolutions described in \secref{sec:e}; it is generated by a group structure on a countable set of states (and it is precisely Shannon's asymptotic entropy that served as a model for defining the asymptotic entropy of random walks by A. Avez). The question of a unified approach to these two situations was raised back in the 1970s by A. M. Vershik; later \cite[p.~64]{Vershik00r}, he proposed to use Poisson boundary polymorphisms for this purpose. As for the Boltzmann -- Planck -- von Neumann entropy in statistical physics, as recently noted by Vershik \cite[p.~49]{Vershik10ar}, its connection with Shannon's theory remains problematic.


\begin{thebibliography}{10}

\bibitem{Adelson-Shreider57r}
G.~M. Adelson-Velskii, Yu.~A. Shreider,
\newblock \emph{The Banach mean on groups} (Russian),
\newblock Uspehi Mat. Nauk (N.S.), \textbf{12}:6 (1957), 131--136.

\bibitem{Vershik73r}
A.~M. Vershik,
\newblock \emph{Countable groups that are close to finite ones} (Russian),
\newblock in Russian transl. of: {F.}~{Greenleaf}, \emph{Invariant means on topological groups and their applications,} Mir, Moscow, 1973, 112--135;
\newblock revised English transl.: \emph{Amenability and approximation of infinite groups,} Selecta Math. Soviet., \textbf{2}:4 (1982), 311--330.

\bibitem{Vershik00r}
A.~M. Vershik,
\newblock \emph{Dynamic theory of growth in groups: entropy, boundaries, examples} (Russian),
\newblock Uspekhi Mat. Nauk, \textbf{55}:4 (2000), 59--128;
\newblock English transl.: Russian Math. Surveys, \textbf{55}:4 (2000), 667--733.

\bibitem{Vershik10ar}
A.~M. Vershik,
\newblock \emph{Information, entropy, dynamics} (Russian),
\newblock in: \emph{Mathematics of the XXth century: a view from St.Petersburg} (Russian), MCCME, Moscow, 2010, 47--76.

\bibitem{Vershik20r}
A.~M. Vershik,
\newblock \emph{The history of V. A. Rokhlin's ergodic seminar (1960–1970)} (Russian),
\newblock Zap. Nauchn. Sem. S.-Peterburg. Otdel. Mat. Inst. Steklov. (POMI), \textbf{498} (2020), 105--120;
\newblock English transl.: J. Math. Sci. (N.Y.) \textbf{255}:2 (2021), 175--183.

\bibitem{Vershik-Kaimanovich79r}
A.~M. Vershik, V.~A. Kaimanovich,
\newblock \emph{Random walks on groups: boundary, entropy, uniform distribution} (Russian),
\newblock Dokl. Akad. Nauk SSSR, \textbf{249}:1 (1979), 15--18;
\newblock English transl.: Soviet Math. Dokl. \textbf{20}:6 (1979), 1170--1173.

\bibitem{Kaimanovich83ar}
V.~A. Kaimanovich,
\newblock \emph{Examples of noncommutative groups with nontrivial exit-boundary} (Russian),
\newblock Zap. Nauchn. Sem. Leningrad. Otdel. Mat. Inst. Steklov (LOMI), \textbf{123} (1983), 167--184;
\newblock English transl.: J. Soviet Math. \textbf{28}:4 (1985), 579--591.

\bibitem{Molchanov67r}
S.~A. Molchanov,
\newblock \emph{Martin boundary for the direct product of Markov chains} (Russian),
\newblock Teor. Verojatnost. i Primenen., \textbf{12}:2 (1967), 353--358;
\newblock English transl.: Theor. Probability Appl. \textbf{12}:2 (1967), 307--310.

\bibitem{Planck75r}
M. Planck,
\newblock {\em The origin and development of the quantum theory,}
\newblock Clarendon Press, Oxford, 1922.

\bibitem{Rohlin67r}
V.~A. Rokhlin,
\newblock \emph{Lectures on the entropy theory of transformations with invariant measure} (Russian),
\newblock Uspehi Mat. Nauk, \textbf{22}:5 (1967), 3--56;
\newblock English transl.: Russian Math. Surveys \textbf{22}:5 (1967), 1--52.

\bibitem{Ahlfors53}
L.~V. Ahlfors,
\newblock \emph{Development of the theory of conformal mapping and {R}iemann surfaces
  through a century,}
\newblock in: {\em Contributions to the theory of {R}iemann {S}urfaces}, Annals
  of Mathematics Studies, \textbf{30}, Princeton University Press,
  Princeton, N. J., 1953, 3--13.

\bibitem{Alpeev21p}
A. Alpeev,
\newblock \emph{Examples of measures with trivial left and non-trivial right random walk tail boundary,}
\newblock arXiv:2105.11359.

\bibitem{Alpeev23p}
A. Alpeev,
\newblock \emph{Secret sharing on the {P}oisson-{F}urstenberg boundary}
\newblock (to appear).

\bibitem{Avez72}
A. Avez,
\newblock \emph{Entropie des groupes de type fini,}
\newblock C. R. Acad. Sci. Paris S\'er. A-B, \textbf{275} (1972), A1363--A1366.

\bibitem{Boltzmann77}
L. Boltzmann,
\newblock {\em {\"U}ber die Beziehung zwischen dem zweiten Hauptsatze der
  mechanischen W{\"a}rmetheorie und der Wahrscheinlichkeitsrechnung resp. den
  S{\"a}tzen {\"u}ber das W{\"a}rmegleichgewicht},
\newblock in: \emph{Wissenschaftliche Abhandlungen,} v. 2, Reprint of the 1909 ed., Camb. Libr. Collect.
Phys. Sci., Cambridge Univ. Press, Cambridge, 2012, 164--223.

\bibitem{Boltzmann72}
L. Boltzmann,
\newblock {\em Weitere Studien {\"u}ber das W{\"a}rmegleichgewicht unter
  Gas-molek{\"u}len},
\newblock in: \emph{Wissenschaftliche Abhandlungen,} v. 1, Reprint of the 1909 ed., Camb. Libr. Collect.
Phys. Sci., Cambridge Univ. Press, Cambridge, 2012, 316--402.

\bibitem{Bottazzini86b}
U. Bottazzini,
\newblock {\em The higher calculus: a history of real and complex analysis from
  {E}uler to {W}eierstrass},
\newblock Springer-Verlag, New York, 1986.

\bibitem{Bottazzini-Gray13b}
U. Bottazzini, J. Gray,
\newblock {\em Hidden harmony---geometric fantasies. The rise of complex function theory,}
\newblock Sources Stud. Hist. Math. Phys. Sci., Springer, New York, 2013.

\bibitem{Brofferio-Schapira11}
S. Brofferio, B. Schapira,
\newblock \emph{Poisson boundary of {$GL_d(\Bbb Q)$},}
\newblock Israel J. Math., \textbf{185} (2011), 125--140.

\bibitem{Chawla-Forghani-Frisch-Tiozzo22p}
K. Chawla, B. Forghani, J. Frisch, G. Tiozzo,
\newblock \emph{The {P}oisson boundary of hyperbolic groups without moment conditions,}
\newblock arXiv:2209.02114.

\bibitem{Derriennic80}
Y. Derriennic,
\newblock \emph{\emph{Quelques applications du th\'eor\`eme ergodique sous-additif,}}
\newblock in: {\em Conference on Random Walks} (Kleebach, 1979), Ast\'erisque, \textbf{74}, Soc. Math. France, Paris, 1980, 183--201.

\bibitem{Erschler10}
A. Erschler,
\newblock \emph{Poisson-{F}urstenberg boundaries, large-scale geometry and growth of groups,}
\newblock in: {\em Proceedings of the {I}nternational {C}ongress of {M}athematicians,} v. II (Hyderabad, 2010), Hindustan Book Agency, New Delhi, 2011, 681--704.

\bibitem{Erschler-Frisch22p}
A. Erschler, J. Frisch,
\newblock \emph{{P}oisson boundary of group extensions,}
\newblock arXiv:2206.11111.

\bibitem{Erschler-Kaimanovich23}
A. Erschler, V.~A. Kaimanovich,
\newblock \emph{Arboreal structures on groups and the associated boundaries,}
\newblock Geom. Funct. Anal., \textbf{33}:3 (2023), 694--748.

\bibitem{Forghani-Kaimanovich23p}
B. Forghani, V.~A. Kaimanovich,
\newblock \emph{Boundary preserving transformations of random walks}
\newblock (to appear).

\bibitem{Frisch-Hartman-Tamuz-Ferdowsi19}
J. Frisch, Y. Hartman, O. Tamuz, P. V. Ferdowsi,
\newblock \emph{Choquet-{D}eny groups and the infinite conjugacy class property,}
\newblock Ann. of Math. (2), \textbf{190}:1 (2019), 307--320.

\bibitem{Frisch-Silva23p}
J. Frisch, E. Silva,
\newblock \emph{The {P}oisson boundary of wreath products,}
\newblock arXiv:2310.10160.

\bibitem{Furman02}
A. Furman,
\newblock \emph{Random walks on groups and random transformations,}
\newblock in: {\em Handbook of dynamical systems,} v. 1{A}, North-Holland, Amsterdam, 2002, 931--1014.

\bibitem{Furstenberg63a}
H. Furstenberg,
\newblock \emph{Noncommuting random products,}
\newblock Trans. Amer. Math. Soc., \textbf{108} (1963), 377--428.

\bibitem{Furstenberg-Glasner10}
H. Furstenberg, E. Glasner,
\newblock \emph{Stationary dynamical systems,}
\newblock in: {\em Dynamical numbers---interplay between dynamical systems and
  number theory}, Contemp. Math., \textbf{532}, Amer. Math.
  Soc., Providence, RI, 2010, 1--28.

\bibitem{Gibbs02}
J.~W. Gibbs,
\newblock {\em Elementary principles in statistical mechanics. Developed with
  especial reference to the rational foundation of thermodynamics,}
\newblock Reprint of the 1902 ed., Camb. Libr. Collect. Math., Cambridge Univ. Press, Cambridge, 2010.

\bibitem{Goldstein-Lebowitz-Tumulka-Zanghi20}
S. Goldstein, J.~L. Lebowitz, R. Tumulka, N. Zangh{\`\i},
\newblock \emph{Gibbs and {B}oltzmann entropy in classical and quantum mechanics,}
\newblock in: {\em Statistical mechanics and scientific explanation,} World Sci. Publ., Singapore, 2020, 519--581.

\bibitem{Grigorchuk-Kaimanovich23}
R.~I. Grigorchuk, V.~A Kaimanovich,
\newblock \emph{Random walks on product groups and self-similarity} (to appear).

\bibitem{Hall54}
P.~Hall,
\newblock \emph{Finiteness conditions for soluble groups,}
\newblock Proc. London Math. Soc. (3), \textbf{4} (1954), 419--436.

\bibitem{Hopfensperger20}
J. Hopfensperger,
\newblock \emph{Counting topologically invariant means on {$L_\infty(G)$} and {$VN(G)$} with ultrafilters,}
\newblock Rocky Mountain J. Math., \textbf{50}:6 (2020), 2103--2115.

\bibitem{Kaimanovich-Vershik83}
V.~A. Kaimanovich, A.~M. Vershik,
\newblock \emph{Random walks on discrete groups: boundary and entropy,}
\newblock Ann. Probab., \textbf{11}:3 (1983), 457--490.

\bibitem{Kaimanovich90a}
V.~A. Kaimanovich,
\newblock \emph{Boundary and entropy of random walks in random environment,}
\newblock in: {\em Probability theory and mathematical statistics,} v.\ {I} ({V}ilnius, 1989), Mokslas, Vilnius, 1990, 573-579.

\bibitem{Kaimanovich91}
V.~A. Kaimanovich,
\newblock \emph{Poisson boundaries of random walks on discrete solvable groups,}
\newblock in: {\em Probability measures on groups,} v. {X} ({O}berwolfach, 1990), Plenum Press, New York, 1991, 205--238.

\bibitem{Kaimanovich92}
V.~A. Kaimanovich,
\newblock \emph{Measure-theoretic boundaries of {M}arkov chains, {$0$}-{$2$} laws and entropy,}
\newblock in: {\em Harmonic analysis and discrete potential theory} (Frascati, 1991), Plenum Press, New York, 1992, 145--180.

\bibitem{Kaimanovich00a}
V.~A. Kaimanovich,
\newblock \emph{The {P}oisson formula for groups with hyperbolic properties,}
\newblock Ann. of Math. (2), \textbf{152}:3 (2000), 659--692.

\bibitem{Kaimanovich05a}
V.~A. Kaimanovich,
\newblock \emph{Amenability and the {L}iouville property,}
\newblock Israel J. Math., \textbf{149} (2005), 45--85.

\bibitem{Kaimanovich-Fisher98}
V.~A. Kaimanovich, A. Fisher,
\newblock \emph{A {P}oisson formula for harmonic projections,}
\newblock Ann. Inst. H. Poincar\'e Probab. Statist., \textbf{34}:2 (1998), 209--216.

\bibitem{Kaimanovich-Sobieczky12}
V.~A. Kaimanovich, F. Sobieczky,
\newblock \emph{Random walks on random horospheric products,}
\newblock in: {\em Dynamical systems and group actions}, Contemp. Math., \textbf{567}, Amer. Math. Soc., Providence, RI, 2012, 163--183.

\bibitem{Lutzen90b}
J. L{\"u}tzen,
\newblock {\em Joseph {L}iouville 1809--1882: master of pure and applied mathematics},
\newblock Stud. Hist. Math. Phys. Sci., \textbf{15}, Springer-Verlag, New York, 1990.

\bibitem{Lyons-Pemantle-Peres96a}
R. Lyons, R. Pemantle, Y. Peres,
\newblock \emph{Random walks on the lamplighter group,}
\newblock Ann. Probab., \textbf{24}:4 (1996), 1993--2006.

\bibitem{Milnes81}
P. Milnes,
\newblock \emph{Amenable groups for which every topologically left invariant mean is right invariant,}
\newblock Rocky Mountain J. Math., \textbf{11}:2 (1981), 261--266.

\bibitem{Paterson79}
A.~L.~T. Paterson,
\newblock \emph{Amenable groups for which every topological left invariant mean is invariant,}
\newblock Pacific J. Math., \textbf{84}:2 (1979), 391--397.

\bibitem{Picardello-Woess92}
M.~A. Picardello, W. Woess,
\newblock \emph{Martin boundaries of {C}artesian products of {M}arkov chains,}
\newblock Nagoya Math. J., \textbf{128} (1992), 153--169.

\bibitem{Planck13b}
M. Planck,
\newblock {\em Vorlesungen {\"u}ber die {T}heorie der {W}{\"a}rmestrahlung,}
\newblock 2. Aufl., J. A. Barth, Leipzig, 1913;
\newblock English transl: \emph{The theory of heat radiation}, P. Blakiston's Son \& Co., Philadelphia, 1914.

\bibitem{Rioul21}
O. Rioul,
\newblock \emph{This is {IT}: a primer on {S}hannon's entropy and information,}
\newblock in: {\em Information theory---{P}oincar\'{e} {S}eminar 2018}, Prog. Math. Phys., \textbf{78}, Birkh\"{a}user/Springer, Cham, 2021, 49--86.

\bibitem{Rosenblatt81}
J. Rosenblatt,
\newblock \emph{Ergodic and mixing random walks on locally compact groups,}
\newblock Math. Ann., \textbf{257}:1 (1981), 31--42.

\bibitem{Rosenblatt-Talagrand81}
J. Rosenblatt, M. Talagrand,
\newblock \emph{Different types of invariant means,}
\newblock J. London Math. Soc. (2), \textbf{24}:3 (1981), 525--532.

\bibitem{Shannon48}
C.~E. Shannon,
\newblock \emph{A mathematical theory of communication,}
\newblock Bell System Tech. J., \textbf{27}:3, 4 (1948), 379--423, 623--656.

\bibitem{vonNeumann27}
J. von Neumann,
\newblock \emph{Thermodynamik quantenmechanischer Gesamtheiten,}
\newblock Nachr. Ges. Wiss. G{\"o}ttingen, Math.-Phys. Kl., \textbf{1927} (1927), 276--291.

\bibitem{Wells76}
C. Wells,
\newblock \emph{Some applications of the wreath product construction,}
\newblock Amer. Math. Monthly, \textbf{83}:5 (1976), 317--338.

\bibitem{Zheng21p}
T. Zheng,
\newblock \emph{Asymptotic behaviors of random walks on countable groups,}
\newblock in: \emph{ICM---International Congress of Mathematicians,} v. IV, Sections 5–8, EMS Press, Berlin, 2023, 3340--3365.

\end{thebibliography}
\end{document}